\newtheorem{dn}{Definition}[section]
\newtheorem{dl}{Theorem}[section]
\newtheorem{md}{Proposition}[section]
\newtheorem{bd}{Lemma}[section]
\newtheorem{hq}{Corollary}[section]
\newtheorem{nx}{Remark}[section]
\newtheorem{vd}{Example}[section]
\newcommand{\R}{\mathbb{R}}
\newcommand{\Z}{\mathbb{Z}}
\newcommand{\N}{\mathbb{N}}
\newcommand{\e}{\varepsilon}
\newcommand{\ity}{\infty}
\newcommand{\bbd}{\begin{bd}}
\newcommand{\ebd}{\end{bd}}
\newcommand{\bdn}{\begin{dn}}
\newcommand{\edn}{\end{dn}}
\newcommand{\bhq}{\begin{hq}}
\newcommand{\ehq}{\end{hq}}
\newcommand{\bdl}{\begin{dl}}
\newcommand{\edl}{\end{dl}}
\newcommand{\bnx}{\begin{nx}}
\newcommand{\enx}{\end{nx}}
\newcommand{\bmd}{\begin{md}}
\newcommand{\emd}{\end{md}}
\newcommand{\bvd}{\begin{vd}}
\newcommand{\evd}{\end{vd}}
\title[Semi-linear $\sigma$-evolution equations with frictional and visco-elastic damping]{Study of semi-linear $\sigma$-evolution equations with frictional and visco-elastic damping}
\author{Hironori Michihisa}
\address{Hironori Michihisa \hfill\break
Department of Mathematics, Graduate School of Science, Hiroshima University \hfill\break
Higashi-Hiroshima 739-8526, Japan}
\email{hi.michihisa@gmail.com}
\author{Tuan Anh Dao$^*$}
\address{Tuan Anh Dao \hfill\break
School of Applied Mathematics and Informatics, Hanoi University of Science and Technology, No.1 Dai Co Viet road, Hanoi, Vietnam \hfill\break
Faculty for Mathematics and Computer Science, TU Bergakademie Freiberg, Pr\"{u}ferstr. 9, 09596, Freiberg, Germany}
\email{anh.daotuan@hust.edu.vn}
\thanks{$^*$ Corresponding author: Tuan Anh Dao}
\begin{document}

\subjclass[2010]{Primary: 35G25, 35B40; Secondary: 35B33, 35C20.}
\keywords{$\sigma$-evolution equations; frictional damping; visco-elastic damping; asymptotic profile; diffusion phenomenona; global existence; critical exponent}
	
\begin{abstract}
In this article, we study semi-linear $\sigma$-evolution equations with double damping including frictional and visco-elastic damping for any $\sigma\ge 1$. We are interested in investigating not only higher order asymptotic expansions of solutions but also diffusion phenomenon in the $L^p-L^q$ framework, with $1\le p\le q\le \ity$, to the corresponding linear equations. By assuming additional $L^{m}$ regularity on the initial data, with $m\in [1,2)$, we prove the global (in time) existence of small data energy solutions and indicate the large time behavior of the global obtained solutions as well to semi-linear equations. Moreover, we also determine the so-called critical exponent when $\sigma$ is integers.
\end{abstract}
%\linenumbers
\maketitle

\tableofcontents

%=================================================================================={Introduction and main results}	
\section{Introduction and main results}
In this paper, let us consider the following Cauchy problem for semi-linear $\sigma$-evolution equations with frictional and visco-elastic damping terms:
\begin{equation}
\begin{cases}
u_{tt}+ (-\Delta)^\sigma u+ u_t+ (-\Delta)^{\sigma} u_t= |u|^p, \\
u(0,x)= u_0(x),\quad u_t(0,x)=u_1(x),
\end{cases}
\label{equation1.1}
\end{equation}
where $\sigma \ge 1$ and a given real number $p>1$. The corresponding linear equation with vanishing right-hand side is
\begin{equation}
\begin{cases}
u_{tt}+ (-\Delta)^\sigma u+ u_t+ (-\Delta)^{\sigma} u_t= 0, \\
u(0,x)= u_0(x),\quad u_t(0,x)=u_1(x).
\end{cases}
\label{equation1.2}
\end{equation}
At first, let us recall some recent results concerning the study of typical important problems of (\ref{equation1.1}) and (\ref{equation1.2}) with $\sigma=1$, the so-called wave equations with frictional damping and visco-elastic damping. Of special interest are the following Cauchy problems:
\begin{equation}
\begin{cases}
u_{tt}- \Delta u+ u_t- \Delta u_t= |u|^p, \\
u(0,x)= u_0(x),\quad u_t(0,x)=u_1(x).
\end{cases}
\label{equation1.3}
\end{equation}
Namely, in \cite{IkehataSawada} the authors derived the asymptotic profile of the solutions in the $L^2$ setting to the corresponding linear equations of (\ref{equation1.3}) by assuming weighted $L^{1,1}$ initial data from the energy space. In comparison with the two types of damping terms, they analyzed the interesting properties which tell us that the effect of the frictional damping is really more dominant than that of the visco-elastic one, the so-called strong damping (see, for example, \cite{Ikehata,IkehataTodorovaYordanov}), by the study of asymptotic profile as $t\to \ity$. In addition, the higher order (up to the first order) asymptotic profiles of the solutions to the linear corresponding equations of (\ref{equation1.3}) were discussed in the space dimension $n=1$ only. Quite recently, the authors in \cite{IkehataMichihisa} have succeeded in obtaining some higher order (greater than the second order) asymptotic expansions of the solutions to this linear equation under more heavy moment conditions on the initial data for any space dimensions by applying Taylor expansion theorem effectively (see more \cite{Michihisa1,Michihisa2,Takeda}). For the treatment of the semi-linear equations (\ref{equation1.3}), in \cite{Dabbicco} some obtained energy estimates combined with $L^1-L^1$ estimates come into play to prove the global (in time) solutions for any space dimensions. Moreover, taking into consideration the effect of the two damping types as mentioned in \cite{IkehataSawada} to the corresponding linear problem the authors in \cite{IkehataTakeda2017} pointed out again this effect which is still true for the semi-linear problems (\ref{equation1.3}). In particular, they indicated that the critical exponent $p_{crit}=1+\frac{2}{n}$ coincides with the so-called Fujita exponent which is well-known to be the critical exponent for the semi-linear heat equations and the semi-linear classical damped wave equations as well with nonlinearity term $|u|^p$. Besides, not only the existence of the global solutions to (\ref{equation1.3}) has been investigated but also the large time behavior of the obtained global solutions has been established in low space dimensions in \cite{IkehataTakeda2017}.
\par Hence, related to the more general cases of (\ref{equation1.1}) and (\ref{equation1.2}) with $\sigma \ge 1$, a natural question is whether or not the frictional damping is still more dominant than the visco-elastic one for any $\sigma\ge 1$ as it happened for the case $\sigma=1$. One of the main goals of this paper is to give a positive answer to this question. More recently, the authors in \cite{DaoReissig1,DaoReissig2} have studied the following Cauchy problem for structurally damped $\sigma$-evolution equations (see also \cite{DabbiccoEbert2016,DabbiccoEbert2017,DuongReissig}):
\begin{equation}
\begin{cases}
u_{tt}+ (-\Delta)^\sigma u+ (-\Delta)^{\delta} u_t= 0, \\
u(0,x)= u_0(x),\quad u_t(0,x)=u_1(x).
\end{cases}
\label{equation1.4}
\end{equation}
From the point of view of decay estimates, they emphasized that the properties of the solutions change completely from the case $\delta=0$, corresponding to the frictional damping, to the case $\delta=\sigma$, corresponding to the visco-elastic damping. More in detail, they proposed to distinguish between ``parabolic like models" in the former case (see \cite{DabbiccoReissig,ReissigEbert} for the classical damped wave equations in the case $\sigma=1$) and ``$\sigma$-evolution like models" in the latter case, the so-called ``hyperbolic like models" or ``wave like models" in the case $\sigma=1$ (see \cite{DabbiccoReissig,GalaktionovMitidieri}). Roughly speaking, the asymptotic profile of the solutions to (\ref{equation1.4}) with $\delta=0$, as $t \to \ity$, is same as that of the following anomalous diffusion equations:
\begin{equation}
v_t+ (-\Delta)^\sigma v= 0, \qquad v(0,x)= v_0(x),
\label{equation1.5}
\end{equation}
for a suitable choice of data $v_0$ (see, for instance, \cite{DabbiccoEbert2016,DuongReissig}). Meanwhile, for the case $\delta=\sigma$ this phenomenon is no longer true, that is, some kind of wave structure appears and oscillations come into play from the asymptotic profile of the solutions to (\ref{equation1.4}). Furthermore, compared with the regularity of the initial data we can see that a smoothing effect appears for some derivatives of the solutions to (\ref{equation1.4}) with respect to the time variable (see \cite{DaoReissig2}) in the latter case. This brings some benefits in treament of the corresponding semi-linear equations. Otherwise, in the former case this effect does not happen (see \cite{DaoReissig1}). In the connection between the two types of damping terms appearing in (\ref{equation1.2}), the asymptotic profile of the solutions inherits both these above mentioned properties of the two kind of models to give new results. For this reason, the second main goal of the present paper is to conclude a diffusion phenomenon not only in the $L^2-L^2$ theory (see more \cite{DuongReissig,ReissigEbert}) but also in the $L^p-L^q$ framework (see also \cite{DabbiccoEbert2014,Narazaki}), where $1\le p\le q\le \ity$. Moreover, we also establish some higher order asymptotic expansions of the difference between the solutions to (\ref{equation1.2}) and those to (\ref{equation1.5}) by developing several techniques in \cite{IkehataMichihisa}. In order to explain these results more precisely, one knows that these results come from estimates for small-frequency part of the solutions to (\ref{equation1.2}) whose profile is modified by the presence of the fractional damping, whereas their large-frequency profile is modified by the presence of the visco-elastic damping. Our third main goal of this paper is to prove the global (in time) existence of small data energy solutions to (\ref{equation1.1}) and analyze the large time behavior of these global solutions as well by mixing additional $L^{m}$ regularity for the data with $m\in [1,2)$. Finally, when $\sigma$ is integers, a blow-up result is shown to find the critical exponent $p_{crit}=1+\frac{2\sigma}{n}$.

\subsection{Notations}
\par Throughout the present paper, we use the following notations.
\begin{itemize}[leftmargin=*]
\item We write $f\lesssim g$ when there exists a constant $C>0$ such that $f\le Cg$, and $f \approx g$ when $g\lesssim f\lesssim g$.
\item We denote $\hat{f}(t,\xi):= \mathcal{F}_{x\rightarrow \xi}\big(f(t,x)\big)$ as the Fourier transform with respect to the space variable of a function $f(t,x)$. As usual, $H^{a}$ and $\dot{H}^{a}$, with $a \ge 0$, denote Bessel and Riesz potential spaces based on $L^2$ spaces. Here $\big<D\big>^{a}$ and $|D|^{a}$ stand for the pseudo-differential operators with symbols $\big<\xi\big>^{a}$ and $|\xi|^{a}$, respectively.
\item For any $\gamma>0$, the weighted spaces $L^{1,\gamma}(\R^n)$ are defined by
$$ L^{1,\gamma}(\R^n):= \Big\{f\in L^1(\R^n) \text{ such that } \|f\|_{L^{1,\gamma}}:= \int_{\R^n} (1+|x|)^\gamma f(x)\,dx<+\infty \Big\}. $$
\item For any $s \in \R$, we denote $[s]^+:= \max\{s,0\}$ as its positive part, and $[s]:= \max \big\{k \in \Z \,\, : \,\, k\le s \big\}$ as its integer part.
\item We denote $\N_0:= \N \cup \{0\}$.
\item For later convenience, we put
$$ G_\sigma(t,x):=\mathcal{F}^{-1}\big(e^{-t|\xi|^{2\sigma}}\big)(x), $$
and denote the following two quantities:
$$P_0:= \int_{\R^n}u_0(x)dx \quad \text{ and }\quad P_1:= \int_{\R^n}u_1(x)dx. $$
\item Let $\chi(|\xi|)$ be $\mathcal{C}_0^\ity(\R^n)$ a smooth cut-off function equal to $1$ for small $|\xi|$ and vanishing for large $|\xi|$. We decompose a function $f(t,x)$ into two parts localized separately at low and high frequencies as follows:
$$ f(t,x)= f_{\text{\fontshape{n}\selectfont low}}(t,x)+ f_{\text{\fontshape{n}\selectfont high}}(t,x), $$
where we denote \[f_{\text{\fontshape{n}\selectfont low}}(t,x)= \mathcal{F}^{-1}\big(\chi(|\xi|)\hat{f}(t,\xi)\big)\quad \text{ and }\quad f_{\text{\fontshape{n}\selectfont high}}(t,x)= \mathcal{F}^{-1}\Big(\big(1-\chi(|\xi|)\big)\hat{f}(t,\xi)\Big). \]
\item Applying the Fourier transform to (\ref{equation1.2}) we have
\begin{equation}
\begin{cases}
\hat{u}_{tt}+ |\xi|^{2\sigma} \hat{u}+ \hat{u}_t+|\xi|^{2\sigma} \hat{u}_t= 0, \\
\hat{u}(0,\xi)= \widehat{u_0}(\xi),\quad \hat{u}_t(0,\xi)=\widehat{u_1}(\xi).
\end{cases}
\label{equation2.2}
\end{equation}
The characteristic equation of \eqref{equation2.2} is 
\begin{equation}
\lambda^2+ (1+|\xi|^{2\sigma}) \lambda+ |\xi|^{2\sigma}= 0.
\label{equation2.3}
\end{equation}
The solution to \eqref{equation2.3} can be given by 
\begin{align*}
\lambda_\pm
& =\frac{-(1+|\xi|^{2\sigma})\pm\sqrt{(1+|\xi|^{2\sigma})^2-4|\xi|^{2\sigma}}}{2} 
=\frac{-(1+|\xi|^{2\sigma})\pm\big|1-|\xi|^{2\sigma}\big|}{2},
\end{align*}
i.e., 
\begin{align*}
\begin{cases}
\lambda_+= -|\xi|^{2\sigma}, \quad \lambda_-= -1 &\text{ if } |\xi|\le1, \\
\lambda_+= -1, \quad \lambda_-= -|\xi|^{2\sigma} &\text{ if } |\xi|\ge1.
\end{cases} 
\end{align*}
So we explicitly write down the solution formula in the Fourier space as follows: 
\begin{align}
\hat{u}(t,\xi)
& =\frac{1}{1-|\xi|^{2\sigma}}e^{-t|\xi|^{2\sigma}}\big(\widehat{u_0}(\xi)+\widehat{u_1}(\xi)\big)
-\frac{1}{1-|\xi|^{2\sigma}}e^{-t}\big(|\xi|^{2\sigma} \widehat{u_0}(\xi)+\widehat{u_1}(\xi)\big) \label{equation2.4} \\
& =\frac{e^{-t|\xi|^{2\sigma}}-|\xi|^{2\sigma} e^{-t}}{1-|\xi|^{2\sigma}}\widehat{u_0}(\xi)
+\frac{e^{-t|\xi|^{2\sigma}}-e^{-t}}{1-|\xi|^{2\sigma}}\widehat{u_1}(\xi) \label{equation2.5} \\
& =\frac{1}{|\xi|^{2\sigma}-1}e^{-t}\big(|\xi|^{2\sigma} \widehat{u_0}(\xi)+\widehat{u_1}(\xi)\big)
-\frac{1}{|\xi|^{2\sigma}-1}e^{-t|\xi|^{2\sigma}}\big(\widehat{u_0}(\xi)+\widehat{u_1}(\xi)\big). \label{equation2.6}
\end{align}
Note that $\{\xi \in \R^n : |\xi|=1\}$ is not a singular set. 
Indeed, we can give an equivalent formula: 
\begin{align}
\hat{u}(t,\xi)
=e^{-t}\widehat{u_0}(\xi)
+\left(
e^{-t}
\int_0^t
e^{-s(|\xi|^{2\sigma}-1)} \,
ds
\right)
(\widehat{u_0}(\xi)+\widehat{u_1}(\xi)).
\label{sol}
\end{align}
\item For purpose of this paper, we write $v_0:= u_0+ u_1$. 
So we fix the initial data $v_0= u_0+ u_1$ to (\ref{equation1.5}).
\end{itemize}

\subsection{Main results}
Let us state the main results that will be proved in this paper.
\par At first, we indicate higher order asymptotic expansions of the solutions to (\ref{equation1.2}) with weighted initial data.
\bdl \label{TheoremLinear}
Let $n\ge1$ and $u_0,\,u_1\in L^{1,\gamma}(\R^n)\cap L^2(\R^n)$ with $\gamma\ge0$. 
Take $k\in\N_0$ satisfying $\lambda(k)\le\gamma<\lambda(k+1)$.  
Then, the function $u$ defined by \eqref{equation2.4}-\eqref{sol} satisfies 
\begin{equation}
\label{AE1}
\big\|\hat{u}(t,\xi)-A_k^{\sigma,v_0}(\xi)e^{-t|\xi|^{2\sigma}}\big\|_{L^2}
\lesssim t^{-\frac{n}{4\sigma}-\frac{\gamma}{2\sigma}}\big(\|v_0\|_{L^{1,\gamma}}+\|u_0\|_{L^1\cap L^2}+\|u_1\|_{L^1\cap L^2}\big),
\qquad
t\ge1.
\end{equation}
Here, we introduce $A_k^{\sigma,v_0}(\xi)$ as in the expression (\ref{XXX}) (see Definition \ref{Def2.2}). Furthermore, it holds 
\begin{equation}
\label{AE2}
\lim_{t\to\infty}
t^{\frac{n}{4\sigma}+\frac{\gamma}{2\sigma}}
\big\|\hat{u}(t,\xi)-A_k^{\sigma,v_0}(\xi)e^{-t|\xi|^{2\sigma}}\big\|_{L^2}
=0.
\end{equation}
\edl

The second result is concerned with the diffusion phenomenon in the $L^p-L^q$ framework with $1\le p\le q\le \ity$ to (\ref{equation1.2}).
\bdl \label{theorem2.1}
Let $1\le p\le q\le \ity$. Let $u$ be the solution to (\ref{equation1.2}) and let $v$ be the solution to (\ref{equation1.5}). Then, for large $t \ge 1$ we have the following $L^p-L^q$ estimate:
$$ \big\|\partial_t^j |D|^a \big(u_{\text{\fontshape{n}\selectfont low}}(t,\cdot)- v_{\text{\fontshape{n}\selectfont low}}(t,\cdot)\big)\big\|_{L^q} \lesssim (1+t)^{-\frac{n}{2\sigma}(\frac{1}{p}- \frac{1}{q})- \frac{a}{2\sigma}-j-1} \big(\|u_0\|_{L^p}+ \|u_1\|_{L^p}\big), $$
for all $a\ge 0$, $j=0,\,1$ and for all space dimensions $n\ge 1$.
\edl
	
The third result contains the global (in time) existence of small data energy solutions to (\ref{equation1.1}).
\bdl \label{theorem3.1}
Let $m \in [1,2)$. We assume the conditions
\begin{align}
&\frac{2}{m} \le p< \ity &\quad \text{ if }\,\,\, &n \le 2\sigma, \label{GN1A1} \\
&\frac{2}{m} \le p\le \frac{n}{n- 2\sigma} &\quad \text{ if }\,\,\, &n \in \Big(2\sigma, \frac{4\sigma}{2-m}\Big]. \label{GN1A2}
\end{align}
Moreover, we suppose the following condition:
\begin{equation} \label{exponent1A}
p> 1+\frac{2m\sigma}{n}.
\end{equation}
Then, there exists a constant $\e>0$ such that for any small data
\begin{equation*}
(u_0,u_1) \in \mathcal{A}^{\sigma}_{m}:= (L^m \cap H^{\sigma}) \times (L^m \cap L^{2}) \text{ satisfying the assumption } \|(u_0,u_1)\|_{\mathcal{A}^{\sigma}_{m}} \le \e,
\end{equation*}
we have a uniquely determined global (in time) small data energy solution
$$ u \in C([0,\ity),H^{\sigma})\cap C^1([0,\ity),L^2) $$
to (\ref{equation1.1}). The following estimates hold:
\begin{align*}
\|u(t,\cdot)\|_{L^2}& \lesssim (1+t)^{-\frac{n}{2\sigma}(\frac{1}{m}-\frac{1}{2})} \|(u_0,u_1)\|_{\mathcal{A}^{\sigma}_{m}}, \\
\big\||D|^{\sigma} u(t,\cdot)\big\|_{L^2}& \lesssim (1+t)^{-\frac{n}{2\sigma}(\frac{1}{m}-\frac{1}{2})- \frac{1}{2}} \|(u_0,u_1)\|_{\mathcal{A}^{\sigma}_{m}}, \\
\|u_t(t,\cdot)\|_{L^2}& \lesssim (1+t)^{-\frac{n}{2\sigma}(\frac{1}{m}-\frac{1}{2})- 1} \|(u_0,u_1)\|_{\mathcal{A}^{\sigma}_{m}}.
\end{align*}
\edl

Next, we obtain the large time behavior of the global solutions to (\ref{equation1.1}).
\bdl \label{theorem3.2}
Under the assumptions of Theorem \ref{theorem3.1} with $m=1$, the global (in time) small data energy solutions to (\ref{equation1.1}) satisfy the following estimate:
\begin{equation}
\big\|\partial_t^j |D|^{k\sigma}\big(u(t,\cdot)- M\,G_\sigma(t,\cdot)\big)\big\|_{L^2}= o\big(t^{-\frac{n}{4\sigma}- \frac{k}{2}-j}\big),
\label{LargetimeEstimate3.2}
\end{equation}
for $j,\,k=0,\,1$ and $(j,k)\neq (1,1)$. Here, we denote the quantity 
\[M:= \int_{\R^n} \big(u_0(y)+u_1(y)\big)dy+ \int_0^\ity \int_{\R^n} |u(\tau,y)|^p dyd\tau.\]
\edl

Finally, we obtain the blow-up result to (\ref{equation1.1}).
\begin{dl} \label{dloptimal4.1}
Let $\sigma \ge 1$ be an integer. We assume the initial data $u_0=0$ and $u_1 \in L^1 \cap L^{2}$ satisfying the following relation:
\begin{equation} \label{optimal1}
\liminf_{R\longrightarrow \ity} \int_{|x| < R} u_1(x) dx >0.
\end{equation}
Moreover, we suppose the condition
\begin{equation} \label{optimal2}
1< p\le 1+\frac{2\sigma}{n}.
\end{equation}
Then, there is no global (in time) energy solution to (\ref{equation1.1}). In other words, we have only local (in time) energy solutions to (\ref{equation1.1}), that is, there exists $T_\e< \ity$ such that
$$ \lim_{t \to T_\e-0} \|(u, u_t)\|_{H^{\sigma} \times L^2}= +\ity. $$
\end{dl}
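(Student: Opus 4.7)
We argue by contradiction via the test-function method (Baras--Pierre/Mitidieri--Pohozaev), tuned to the $\sigma$-parabolic scaling $t\sim R^{2\sigma}$, $|x|\sim R$ that governs the long-time behavior of (\ref{equation1.1}); this is the scaling made visible by the diffusion phenomenon in Theorem \ref{theorem2.1}, and $1+2\sigma/n$ is the Fujita exponent of the anomalous diffusion equation (\ref{equation1.5}). Suppose a global (in time) energy solution $u$ exists. We fix plateau cutoffs $\eta\in C_c^\infty([0,\infty))$ with $\eta\equiv1$ on $[0,1]$ and $\mathrm{supp}\,\eta\subset[0,2]$, and $\phi\in C_c^\infty(\R^n)$ with $\phi\equiv1$ on $\{|x|\le1\}$ and $\mathrm{supp}\,\phi\subset\{|x|\le2\}$, and set
$$\psi_R(t,x) := \bigl[\eta(t/R^{2\sigma})\,\phi(x/R)\bigr]^{\ell},$$
with $\ell\in\N$ large enough that $\psi_R^{-1/p}|\partial^\alpha\psi_R|$ stays bounded on $\mathrm{supp}\,\psi_R$ for every multi-index of time order $\le 2$ and space order $\le 2\sigma$ (the choice $\ell=2\sigma p'$ already suffices, where $p'=p/(p-1)$).

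Because $\sigma\in\N$, $(-\Delta)^\sigma$ is a local differential operator, so multiplying (\ref{equation1.1}) by $\psi_R$ and integrating by parts over $(0,\infty)\times\R^n$ is entirely classical. The hypothesis $u_0\equiv0$ kills every boundary contribution except the term $-\int u_1(x)\psi_R(0,x)\,dx$ produced by $\int u_{tt}\psi_R$. Writing $I_R := \int_0^\infty\!\!\int_{\R^n}|u|^p\psi_R\,dx\,dt$, we obtain
$$I_R + \int_{\R^n} u_1(x)\,\psi_R(0,x)\,dx \;\le\; \int_0^\infty\!\!\int_{\R^n}|u|\,\bigl(|\partial_t\psi_R|+|\partial_t^2\psi_R|+|(-\Delta)^\sigma\psi_R|+|(-\Delta)^\sigma\partial_t\psi_R|\bigr)\,dx\,dt.$$
Scaling on $\mathrm{supp}\,\psi_R$ gives $|\partial_t\psi_R|+|(-\Delta)^\sigma\psi_R|\lesssim R^{-2\sigma}\psi_R^{1/p}$ (dominant) and $|\partial_t^2\psi_R|+|(-\Delta)^\sigma\partial_t\psi_R|\lesssim R^{-4\sigma}\psi_R^{1/p}$, while all four vanish on the plateau $[0,R^{2\sigma}]\times\{|x|\le R\}$. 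Denoting by $\mathcal{A}_R$ the annular shell where these derivatives live (of measure $\lesssim R^{n+2\sigma}$), Hölder's inequality with exponents $p,p'$ yields
$$I_R + \int_{\R^n} u_1(x)\,\psi_R(0,x)\,dx \;\lesssim\; R^{-2\sigma}\, R^{(n+2\sigma)/p'}\,\Bigl(\iint_{\mathcal{A}_R}|u|^p\,dx\,dt\Bigr)^{\!1/p} \;=\; R^{\beta}\,\Bigl(\iint_{\mathcal{A}_R}|u|^p\,dx\,dt\Bigr)^{\!1/p},$$
where $\beta := \bigl(n(p-1)-2\sigma\bigr)/p$.

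In the strictly subcritical range $1<p<1+2\sigma/n$ we have $\beta<0$; bounding the annular integral crudely by $I_R$ and invoking Young's inequality absorbs $I_R$ into the left-hand side and gives $I_R\lesssim R^{\beta p/(p-1)}\to0$, hence $\int u_1(x)\psi_R(0,x)\,dx\to0$. Since $\psi_R(0,\cdot)\nearrow1$ pointwise on $\R^n$, this contradicts assumption (\ref{optimal1}). In the critical case $p=1+2\sigma/n$ one has $\beta=0$, so a first pass only gives $I_R\le C$ uniformly in $R$, and consequently $|u|^p\in L^1((0,\infty)\times\R^n)$. Crucially, the right-hand side above already features only $\iint_{\mathcal{A}_R}|u|^p\,dx\,dt$, and since $\mathcal{A}_R$ escapes to infinity in $t$ or in $|x|$, the absolute continuity of the Lebesgue integral forces $\iint_{\mathcal{A}_R}|u|^p\,dx\,dt\to0$ as $R\to\infty$, so $\int u_1(x)\psi_R(0,x)\,dx\to0$, contradicting (\ref{optimal1}) once more.

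The main technical point is the critical case: the plateau-and-annulus structure of $\psi_R$ must localize the derivatives strictly away from $\{t=0\}$, so that the boundary term $\int u_1\psi_R(0,\cdot)\,dx$ inherits its $\liminf$ from the whole of $u_1$ while the right-hand Hölder factor is forced to vanish by the finiteness of $\iint|u|^p\,dx\,dt$ obtained in the first pass. The integrality of $\sigma$ is used precisely at the integration-by-parts step, since it allows the local operator $(-\Delta)^\sigma$ to be transferred classically onto the compactly supported $\psi_R$; for fractional $\sigma$, the nonlocality of $(-\Delta)^\sigma$ would spread mass outside $\mathrm{supp}\,\psi_R$ and require additional far-field/commutator estimates beyond the scope of this simple scheme.
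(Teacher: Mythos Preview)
Your argument is correct and is essentially the paper's own proof: the same test-function method with the parabolic scaling $(t,x)\mapsto(R^{2\sigma}t,Rx)$, the same H\"older step producing the exponent $-2\sigma+(n+2\sigma)/p'=\beta$, and the same two-pass treatment of the critical case (first $I_R\le C$, then the shell integral tends to $0$). The only cosmetic differences are that you build the quotient bound $|\partial^\alpha\psi_R|\lesssim R^{-|\alpha|}\psi_R^{1/p}$ by raising the cutoff to a power $\ell$, whereas the paper simply postulates the analogous bound on $\eta,\varphi$, and your plateau is $[0,R^{2\sigma}]\times B_R$ rather than $[0,R^{2\sigma}/2]\times B_{R/2}$.

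One small slip to repair: in your displayed H\"older estimate the factor is $\bigl(\iint_{\mathcal A_R}|u|^p\bigr)^{1/p}$ \emph{without} the weight $\psi_R$, and this quantity is \emph{not} bounded by $I_R$ (since $\psi_R<1$ on parts of $\mathcal A_R$), so the sentence ``bounding the annular integral crudely by $I_R$'' does not follow as written. The fix is immediate: keep the factor $\psi_R^{1/p}$ through the H\"older step to obtain $\bigl(\iint_{\mathcal A_R}|u|^p\psi_R\bigr)^{1/p}\le I_R^{1/p}$, which is what you need for the subcritical absorption and for the first pass at $p=1+2\sigma/n$; the second pass in the critical case then still works because $\iint_{\mathcal A_R}|u|^p\psi_R\le\iint_{\mathcal A_R}|u|^p\to0$.
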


\begin{nx}
\fontshape{n}
\selectfont
If we choose $m=1$ into Theorem \ref{theorem3.1}, then from Theorem \ref{dloptimal4.1} it is clear to see that the exponent $p$ given by $p=p(n,\sigma)=1+\frac{2\sigma}{n}$ is really critical.
\end{nx}

\par \textbf{The structure of this paper} is organized as follows: Section \ref{Linear estimates} is devoted to estimates for the solutions to (\ref{equation1.2}). In particular, we present some $(L^m \cap L^2)- L^2$ and $L^2- L^2$ estimates for the solutions with $m\in [1,2)$, give the proof of higher order asymptotic expansions of the solutions with weighted initial data, and prove the diffusion phenomenon in the $L^p-L^q$ framework with $1\le p\le q\le \ity$ to (\ref{equation1.2}) in Sections \ref{Sec.LinearEstimates}, \ref{Sec.Expansions} and \ref{Sec.Phenomenon}, respectively. We prove the global (in time) existence of small data energy solutions to (\ref{equation1.1}) in Section \ref{Global existence}, and derive their large time behavior in Section \ref{Large time behavior}. Finally, in Section \ref{Blow-up result}, we show the blow-up result and find the critical exponent as well.

%=================================================================================={Linear estimates}
\section{Estimates for the solutions of the linear Cauchy problem} \label{Linear estimates}

%==============================================
\subsection{$L^m \cap L^2-L^2$ and $L^2-L^2$ estimates}\label{Sec.LinearEstimates}
\bmd \label{proposition2.1}
Let $m \in [1,2)$. Then, the Sobolev solutions to (\ref{equation1.2}) satisfy the $(L^m \cap L^2)-L^2$ estimates
\begin{align*}
\big\|\partial_t^j |D|^a u(t,\cdot)\big\|_{L^2} &\lesssim (1+t)^{-\frac{n}{2\sigma}(\frac{1}{m}-\frac{1}{2})- \frac{a}{2\sigma}-j} \big(\|u_0\|_{L^m \cap H^a}+ \|u_1\|_{L^m \cap H^{[a+2(j-1)\sigma]^+}}\big),
\end{align*}
and the $L^2-L^2$ estimates
$$ \big\|\partial_t^j |D|^a u(t,\cdot)\big\|_{L^2} \lesssim (1+t)^{-\frac{a}{2\sigma}-j}\big(\|u_0\|_{H^a}+ \|u_1\|_{H^{[a+2(j-1)\sigma]^+}}\big), $$
for any $a\ge 0$, $j=0,1$ and for all space dimensions $n\ge 1$.
\emd
\begin{proof}
To derive $(L^m \cap L^2)- L^2$ estimates, our strategy is to control $L^2$ norm of the low-frequency part of the solution by $L^m$ norm of the data, whereas its high-frequency part is estimated by using $L^2-L^2$ estimates with a suitable regularity of the data $u_0$ and $u_1$.
\par We shall divide our considerations into two steps. In the first step, let us devote to estimates for low frequencies. We denote $m'$ as a conjugate number of $m$, this is, $\frac{1}{m}+\frac{1}{m'}=1$ and $m_0$ satisfying $\frac{1}{m_0}= \frac{1}{m}- \frac{1}{2}$. By \eqref{equation2.4}, using the formula of Parseval-Plancherel and H\"{o}lder's inequality leads to
\begin{align}
\big\|\partial_t^j |D|^a u_{\text{\fontshape{n}\selectfont low}}(t,\cdot)\big\|_{L^2}&= \big\| |\xi|^a \partial_t^j \widehat{u}(t,\xi) \chi(|\xi|) \big\|_{L^2} \nonumber \\
&\le \Big\|\frac{(-1)^j |\xi|^{a+2j\sigma}e^{-t|\xi|^{2\sigma}}}{1-|\xi|^{2\sigma}}\big(\widehat{u_0}(\xi)+ \widehat{u_1}(\xi)\big) \chi(|\xi|)\Big\|_{L^2} \nonumber \\
&\qquad + \Big\|\frac{(-1)^{j+1}|\xi|^a e^{-t}}{1-|\xi|^{2\sigma}}\big(|\xi|^{2\sigma} \widehat{u_0}(\xi)+ \widehat{u_1}(\xi)\big)\chi(|\xi|)\Big\|_{L^2} \nonumber \\
&\lesssim \Big\|\frac{|\xi|^{a+2j\sigma}e^{-t|\xi|^{2\sigma}}\chi(|\xi|)}{1-|\xi|^{2\sigma}}\Big\|_{L^{m_0}} \|\widehat{u_0}+ \widehat{u_1}\|_{L^{m'}} \label{pro2.1.1}\\
&\qquad + e^{-t}\,\Big\|\frac{|\xi|^{a+2\sigma} \chi(|\xi|)}{1-|\xi|^{2\sigma}}\Big\|_{L^{m_0}} \|\widehat{u_0}\|_{L^{m'}}+ e^{-t}\,\Big\|\frac{|\xi|^a \chi(|\xi|)}{1-|\xi|^{2\sigma}}\Big\|_{L^{m_0}} \|\widehat{u_1}\|_{L^{m'}}. \label{pro2.1.2}
\end{align}
For the sake of Young-Hausdorff inequality, we can control $ \|\widehat{u_0}+ \widehat{u_1}\|_{L^{m'}}$, $\|\widehat{u_0}\|_{L^{m'}}$ and $\|\widehat{u_1}\|_{L^{m'}}$ by $\|u_0+ u_1\|_{L^m}$, $\|u_0\|_{L^m}$ and $\|u_1\|_{L^m}$, respectively. Hence, we only have to estimate $L^{m_0}$ norm of the multipliers. It is clear to see that the last two $L^{m_0}$ norms are bounded. Taking account of the first $L^{m_0}$ norm, we apply Lemma \ref{LemmaL1normEstimate} to obtain immediately the following estimate:
\begin{equation}
\Big\|\frac{|\xi|^{a+2j\sigma}e^{-t|\xi|^{2\sigma}}\chi(|\xi|)}{1-|\xi|^{2\sigma}}\Big\|_{L^{m_0}} \lesssim  \big\||\xi|^{a+2j\sigma}e^{-t|\xi|^{2\sigma}}\chi(|\xi|)\big\|_{L^{m_0}} \lesssim  (1+t)^{-\frac{n}{2\sigma}(\frac{1}{m}-\frac{1}{2})- \frac{a}{2\sigma}-j}.
\label{pro2.1.3}
\end{equation}
Therefore, from \eqref{pro2.1.1} to \eqref{pro2.1.3} we arrive at
\begin{equation}
\big\|\partial_t^j |D|^a u_{\text{\fontshape{n}\selectfont low}}(t,\cdot)\big\|_{L^2}\lesssim (1+t)^{-\frac{n}{2\sigma}(\frac{1}{m}-\frac{1}{2})- \frac{a}{2\sigma}-j}\|u_0+ u_1\|_{L^m}+ e^{-t}\,\big(\|u_0\|_{L^m}+ \|u_1\|_{L^m}\big).
\label{pro2.1.4}
\end{equation}
Next, let us turn to estimate the solution and some of its derivatives to (\ref{equation1.2}) for large frequencies. Thanks to \eqref{equation2.6}, we apply again the formula of Parseval-Plancherel and use a suitable regularity of the data $u_0$ and $u_1$ to find the following estimate:
\begin{equation}
\big\|\partial_t^j |D|^a u_{\text{\fontshape{n}\selectfont high}}(t,\cdot)\big\|_{L^2} \lesssim e^{-t}\big(\|u_0\|_{H^a}+ \|u_1\|_{H^{[a- 2\sigma]^+}}\big)+ e^{-t}\|u_0+ u_1\|_{H^{[a+2(j-1)\sigma]^+}}.
\label{pro2.1.5}
\end{equation}
From \eqref{pro2.1.4} and \eqref{pro2.1.5} we may conclude all the desired estimates. Summarizing, the proof of Proposition \ref{proposition2.1} is completed.
\end{proof}

\begin{nx}
\fontshape{n}
\selectfont
Here we want to underline that the exponential decay $e^{-t}$  appearing in the proof of Proposition \ref{proposition2.1} is better than the potential decay. Since we have in mind that the characteristic roots $\lambda_{\pm}$ are negative in the middle zone $|\xi| \in \big\{\e,\, \frac{1}{\e}\big\}$ with a sufficiently small positive $\e$, the corresponding estimates yield an exponential decay in this zone, too.
\end{nx}

%=======================================================
\subsection{Asymptotic profile and higher order asymptotic expansions} \label{Sec.Expansions}
In this subsection we obtain higher order asymptotic expansions of the solution to \eqref{equation1.2} in the Fourier space (see Theorem~\ref{TheoremLinear}). 
It is necessary to analyze the solution formulas \eqref{equation2.4}-\eqref{sol} in the low-frequency region. 
In order to state Lemma~\ref{KeyLemma}, which is a key to derive Theorem~\ref{TheoremLinear}, we prepare the following notation and definition. 

For $f\in L^{1,\gamma}(\R^n)$, 
we put
\[
M_\alpha (f)
:=\frac{(-1)^{|\alpha|}}{\alpha!}
\int_{\R^n} x^\alpha f(x)\,dx, 
\qquad
|\alpha|\le[\gamma].
\]

\bdn
For $0<\sigma\in\R$, set $\mathfrak{S}:=\{2\sigma\ell+j : \ell, j\in\N_0\}$. 
We define the following function inductively: 
\begin{align*}
\lambda(k)
:=
\begin{cases}
\min \mathfrak{S}=0 &\text{ for }\,\,\, k=0, \\
\min \mathfrak{S}\setminus\{\lambda(j):0\le j\le k-1\} &\text{ for }\,\,\, k\in\N.
\end{cases}
\end{align*}
\edn

\bnx
\fontshape{n}
\selectfont
\begin{enumerate}[leftmargin=*]
\item 
Now we deal with the case of $\sigma\ge1$ and thus it holds that 
\[
\lambda(k)=k, 
\qquad
k=0,1,2.
\]
\item Since $\N_0\subset \mathfrak{S}$, 
it holds that $0<\lambda(k+1)-\lambda(k)\le1$ for all $k\in\N_0$. 
\end{enumerate}
\label{Remark2.2}
\enx

\bdn
Let $\sigma\ge1$ and $k\in\N_0$. 
We define 
\begin{align}
A^{\sigma,v_0}_k(\xi)
:=\sum_{0\le 2\sigma\ell+j\le\lambda(k)}
\left(
|\xi|^{2\sigma\ell}
\sum_{|\alpha|=j}M_\alpha(v_0)(i\xi)^\alpha
\right).
\label{XXX}
\end{align}
The sum $\displaystyle{\sum_{0\le 2\sigma\ell+j\le\lambda(k)}}$ is taken over all $\ell, j\in\N_0$ satisfying $0\le2\sigma\ell+j\le \lambda(k)$.
\label{Def2.2}
\edn

\bnx
\fontshape{n}
\selectfont
\begin{enumerate}[leftmargin=*]
\item The function~\eqref{XXX} itself can be defined for $v_0 \in L^{1,\gamma}(\R^n)$ with $\gamma\ge[\lambda(k)]$. 
For later necessarity, it suffices to consider \eqref{XXX} for $v_0 \in L^{1,\gamma}(\R^n)$ with $\gamma\ge\lambda(k)$. 
\item Recall Remark~\ref{Remark2.2} to confirm 
\[
A_0^{\sigma, v_0}(\xi)
=M_0(v_0), 
\qquad
A_1^{\sigma, v_0}(\xi)
=\sum_{|\alpha|\le 1}
M_\alpha(v_0)(i\xi)^\alpha, 
\]
\begin{align*}
A_2^{\sigma, v_0}(\xi)=
\begin{cases}
\displaystyle{
\sum_{|\alpha|\le 2}
M_\alpha(v_0)(i\xi)^\alpha
} 
&\text{ if }\,\,\, \sigma>1,  \\[22pt]
|\xi|^2 M_0(v_0)
+\displaystyle{
\sum_{|\alpha|\le2}
M_\alpha(v_0)(i\xi)^\alpha
} 
&\text{ if }\,\,\, \sigma=1. 
\end{cases}
\end{align*}
We can easily see that the difference between the heat flow and equation~\eqref{equation1.2} will come out first in the $k$-th order expansion $A_k^{\sigma,v_0}$ with $k\le\sigma<k+1$. 
However, it seems difficult to write down $A_k^{\sigma, v_0}$ for large $k\in\N_0$. 
In \cite{IkehataMichihisa}, the case of $\sigma=1$ was completely investigated. 
\end{enumerate}
\enx

\bbd \label{KeyLemma}
Let $n\ge1$ and $v_0 \in L^{1,\gamma}(\R^n)$ with $\gamma\ge0$. 
For this $\gamma$, there exists a unique number $k\in\N_0$ satisfying $\lambda(k)\le\gamma<\lambda(k+1)$. 
Then, it holds that 
\begin{align}
\big|F^{\sigma,v_0}(\xi)-A^{\sigma,v_0}_k(\xi)\big|
&\lesssim |\xi|^\gamma \|v_0\|_{L^{1,\gamma}}
\label{YYY}
\end{align}
for $\xi\in\R^n$ with $|\xi|\le1/2$. 
\ebd
\begin{proof}
For given $\gamma\ge0$, we can find $k\in\N_0$ satisfying $\lambda(k)\le\gamma<\lambda(k+1)$. 
In this setting we have 
\[
[\gamma]=[\lambda(k)].
\]
If not, then there exists an integer $b\in\N$ such that $\lambda(k)<b\le\gamma<\lambda(k+1)$. 
All natural numbers are included in $\{\lambda(j) : j\in\N_0\}$ and so this is a contradiction. 

It follows that 
\begin{align*}
F^{\sigma,v_0}(\xi)& =\left(
\sum_{\ell=0}^{[\lambda(k)]} |\xi|^{2\sigma\ell} 
+\frac{|\xi|^{2\sigma([\lambda(k)]+1)}}{1-|\xi|^{2\sigma}}
\right)
\left\{
\sum_{j=0}^{[\lambda(k)]}
\sum_{|\alpha|=j}
M_\alpha(v_0)(i\xi)^\alpha
+
\left(
\widehat{v_0}-
\sum_{|\alpha|\le[\gamma]}
M_\alpha(v_0)(i\xi)^\alpha
\right)
\right\} \\
& =\left(
\sum_{\ell=0}^{[\lambda(k)]} |\xi|^{2\sigma\ell} 
\right)
\left(
\sum_{j=0}^{[\lambda(k)]}
\sum_{|\alpha|=j}
M_\alpha(v_0)(i\xi)^\alpha
\right) \\
& \qquad 
+\left(
\sum_{\ell=0}^{[\lambda(k)]} |\xi|^{2\sigma\ell} 
\right)
\left(
\widehat{v_0}-
\sum_{|\alpha|\le[\gamma]}
M_\alpha(v_0)(i\xi)^\alpha
\right) 
+\frac{|\xi|^{2\sigma([\gamma]+1)}}{1-|\xi|^{2\sigma}}
\widehat{v_0}. 
\end{align*}
From \eqref{moment1}, we see that 
\[
\left|
\left(
\sum_{\ell=0}^{[\lambda(k)]} |\xi|^{2\sigma\ell} 
\right)
\left(
\widehat{v_0}-
\sum_{|\alpha|\le[\gamma]}
M_\alpha(v_0)(i\xi)^\alpha
\right)
\right|
\lesssim 
|\xi|^\gamma 
\|v_0\|_{L^{1,\gamma}}
\]
for $\xi\in\R^n$ with $|\xi|\le 1/2$. 
Since 
\[
2\sigma([\gamma]+1)>\gamma,
\]
one easily sees that 
\[
\left|
\frac{|\xi|^{2\sigma([\gamma]+1)}}{1-|\xi|^{2\sigma}}
\widehat{v_0}
\right|
\lesssim |\xi|^\gamma \|v_0\|_{L^1}
\]
for $\xi\in\R^n$ with $|\xi|\le 1/2$. 
Thus, we arrive at 
\begin{align*}
\left|
F^{\sigma,v_0}(\xi)
-\left(
\sum_{\ell=0}^{[\lambda(k)]} |\xi|^{2\sigma\ell} 
\right)
\left(
\sum_{j=0}^{[\lambda(k)]}
\sum_{|\alpha|=j}
M_\alpha(v_0)(i\xi)^\alpha
\right)
\right|
& \lesssim |\xi|^\gamma \|v_0\|_{L^{1,\gamma}}
+|\xi|^{2\sigma([\gamma]+1)} \|v_0\|_{L^1} \\
& \lesssim |\xi|^\gamma \|v_0\|_{L^{1,\gamma}}
\end{align*}
for $\xi\in\R^n$ with $|\xi|\le 1/2$. 
If $[\lambda(k)]=0$, i.e., $k=0$, then 
\[
\left(
\sum_{\ell=0}^{[\lambda(k)]} |\xi|^{2\sigma\ell} 
\right)
\left(
\sum_{j=0}^{[\lambda(k)]}
\sum_{|\alpha|=j}
M_\alpha(v_0)(i\xi)^\alpha
\right)
=M_0(v_0)
=A_0^{\sigma, v_0}(\xi). 
\]
So in this case we obtain the lemma. 
On the other hand, if $[\lambda(k)]\ge1$, we have
\[
\lambda(k)<(2\sigma+1)[\lambda(k)]
\in \mathfrak{S}.
\]
Hence, it follows that 
\begin{align*}
& \left(
\sum_{\ell=0}^{[\lambda(k)]} |\xi|^{2\sigma\ell} 
\right)
\left(
\sum_{j=0}^{[\lambda(k)]}
\sum_{|\alpha|=j}
M_\alpha(v_0)(i\xi)^\alpha
\right) \\
&\qquad =\sum_{\substack{0\le2\sigma\ell+j\le\lambda(k), \\
0\le j\le[\lambda(k)]}}
\left(
|\xi|^{2\sigma\ell} 
\sum_{|\alpha|=j}
M_\alpha(v_0)(i\xi)^\alpha
\right) 
+\sum_{\substack{\lambda(k)<2\sigma\ell+j\le(2\sigma+1)[\lambda(k)], \\
0\le j\le[\lambda(k)]}}
\left(
|\xi|^{2\sigma\ell} 
\sum_{|\alpha|=j}
M_\alpha(v_0)(i\xi)^\alpha
\right) \\
&\qquad =A_k^{\sigma, v_0}(\xi)
+\sum_{\substack{\lambda(k+1)\le2\sigma\ell+j\le(2\sigma+1)[\lambda(k)], \\
0\le j\le[\gamma]}}
\left(
|\xi|^{2\sigma\ell} 
\sum_{|\alpha|=j}
M_\alpha(v_0)(i\xi)^\alpha
\right).
\end{align*} 
Thus, it holds 
\[
\left|
\sum_{\substack{\lambda(k+1)\le2\sigma\ell+j\le(2\sigma+1)[\lambda(k)], \\
0\le j\le[\gamma]}}
\left(
|\xi|^{2\sigma\ell} 
\sum_{|\alpha|=j}
M_\alpha(v_0)(i\xi)^\alpha
\right)
\right|
\lesssim |\xi|^{\lambda(k+1)}\|v_0\|_{L^{1,[\gamma]}}
\lesssim |\xi|^\gamma \|v_0\|_{L^{1,[\gamma]}}
\]
for $\xi\in\R^n$ with $|\xi|\le 1/2$.  
Therefore, we obtain \eqref{YYY}. 
\end{proof}

\begin{proof}[Proof of Theorem \ref{TheoremLinear}]
It follows from Lemma~\ref{KeyLemma} with Lemma~\ref{LemmaL1normEstimate} that 
\begin{align*}
\big\|F^{\sigma,v_0}(\xi)e^{-t|\xi|^{2\sigma}}
-A_k^{\sigma,v_0}(\xi)e^{-t|\xi|^{2\sigma}}\big\|_{L^2(|\xi|\le1/2)}
\lesssim (1+t)^{-\frac{n}{4\sigma}-\frac{\gamma}{2\sigma}}
\|v_0\|_{L^{1,\gamma}}, 
\qquad
t\ge0, 
\end{align*}
which implies 
\begin{align}
\big\|\hat{u}(t,\xi)
-A_k^{\sigma,v_0}(\xi)e^{-t|\xi|^{2\sigma}}\big\|_{L^2(|\xi|\le1/2)}
\lesssim (1+t)^{-\frac{n}{4\sigma}-\frac{\gamma}{2\sigma}}
(\|v_0\|_{L^{1,\gamma}}
+\|u_0\|_{L^1}+\|u_1\|_{L^1}), 
\qquad
t\ge0, 
\label{LOW}
\end{align}
We employ \eqref{sol} to derive
\[
|\hat{u}(t,\xi)|
\lesssim e^{-t} |\widehat{u_0}(\xi)|
+te^{-t} (|\widehat{u_0}(\xi)|+|\widehat{u_1}(\xi)|)
\]
for $t\ge0$ and $\xi\in\R^n$ with $|\xi|\ge 1/2$. 
We can easily see that 
\[
\big|
A_k^{\sigma,v_0}(\xi)e^{-t|\xi|^{2\sigma}}
\big|
\lesssim |\xi|^{\lambda(k)}
\exp\left(
-\frac{|\xi|^{2\sigma}}{2}
\right)
\exp\left(
-\frac{t}{2^{2\sigma+1}}
\right)
\|v_0\|_{L^{1,[\gamma]}}
\]
for $t\ge1$ and $\xi\in\R^n$ with $|\xi|\ge 1/2$. 
Thus, there exists a constant $c\in(0,2^{-(2\sigma+1)}]$ such that 
\begin{align}
\big\|\hat{u}(t,\xi)-A_k^{\sigma,v_0}(\xi)e^{-t|\xi|^{2\sigma}}\big\|_{L^2(|\xi|\ge1/2)}
\lesssim e^{-ct}
\big(\|u_0\|_{L^2}+\|u_1\|_{L^2}+\|v_0\|_{L^{1,[\gamma]}}\big),
\qquad
t\ge1.
\label{HIGH}
\end{align}
Inequalities~\eqref{LOW} and \eqref{HIGH} give \eqref{AE1}. 

By \eqref{moment2}, we can also obtain \eqref{AE2}. 
See the corresponding proof in \cite{IkehataMichihisa} and details are left to the reader.
\end{proof}

Recalling \eqref{AE1} in Theorem~\ref{TheoremLinear} with $\gamma=1$ we can obtain the following corollary since 
\[
\left\|
\sum_{|\alpha|=1}M_\alpha(v_0)(i\xi)^\alpha
e^{-t|\xi|^{2\sigma}}
\right\|_{L^2}
\lesssim t^{-\frac{n}{4\sigma}- \frac{1}{2\sigma}}\|v_0\|_{L^{1,1}},
\qquad
t>0.
\]
\bhq \label{corollary2.2.1}
Let $n\ge1$ and $u$ be the function defined by \eqref{equation2.4}-\eqref{sol}. If $u_0,\, u_1\in L^{1,1}(\R^n)\cap L^2(\R^n)$, it holds 
$$ \big\|u(t,\cdot)- (P_0+P_1)G_\sigma(t,\cdot)\big\|_{L^2} 
\lesssim t^{-\frac{n}{4\sigma}- \frac{1}{2\sigma}} \big(\|u_0\|_{L^{1,1} \cap L^2}+ \|u_1\|_{L^{1,1} \cap L^2}\big), 
\qquad t\ge 1. $$ 
\ehq

We may conclude the following optimal result at the end of this subsection.
\bhq\label{corollary2.1}
Let $n\ge1$ and $u$ be the function defined by \eqref{equation2.4}-\eqref{sol}. If $u_0,\, u_1\in L^{1}(\R^n)\cap L^2(\R^n)$, it holds 
$$ C_1(|P_0+ P_1|)t^{-\frac{n}{4\sigma}} \le \|u(t,\cdot)\|_{L^2} 
\le C_2 t^{-\frac{n}{4\sigma}} \big(\|u_0\|_{L^{1} \cap L^2}+ \|u_1\|_{L^{1} \cap L^2}\big), 
\qquad 
t\ge 1. $$
Here, $C_1>0$ and $C_2 >0$ are constants independent of $t$ and the initial data.
\ehq
\begin{proof}
The second inequality can be easily given with the aid of \eqref{equation2.4}-\eqref{sol} or \eqref{AE1} with $\gamma=0$. 
So we confirm the first inequality. 
To do so, it suffices to consider the case that $P_0+P_1\not=0$. 
In this situation one has  
\begin{align*}
\|u(t,\cdot)\|_2
& \ge \|u(t,\xi)\|_{L^2(|\xi|\le1/2)} \\
& \ge |P_0+P_1| \|e^{-t|\xi|^{2\sigma}}\|_{L^2(|\xi|\le1/2)}
-o(t^{-\frac{n}{4\sigma}})
\end{align*}
as $t\to\infty$. 
Here, we used \eqref{AE2} with $\gamma=0$, that is, 
\[
\lim_{t\to\infty}
t^{\frac{n}{4\sigma}}
\big\|\hat{u}(t,\xi)- (P_0+P_1)e^{-t|\xi|^{2\sigma}}\big\|_{L^2(|\xi|\le1/2)}
=0.
\]
For $t\ge1$, we have 
\[
\|e^{-t|\xi|^{2\sigma}}\|_{L^2(|\xi|\le1/2)}
=t^{-\frac{n}{4\sigma}}\left(
\int_{|\eta|\le t^\frac{1}{2\sigma}/2} 
e^{-2|\eta|^{2\sigma}}
\,d\eta
\right)^{\frac{1}{2}}
\ge \left(
\int_{|\eta|\le 1/2} 
e^{-2|\eta|^{2\sigma}}
\,d\eta
\right)^{\frac{1}{2}}
t^{-\frac{n}{4\sigma}}
\]
and thus the corollary is obtained. 
\end{proof}

%==============================================
\subsection{Diffusion phenomenon in the $L^p-L^q$ framework} \label{Sec.Phenomenon}
In this section, we shall discuss a relation between the solutions to \eqref{equation1.2} and to the anomalous diffusion equation \eqref{equation1.5}. Clearly, if we consider the power $\sigma= 1$, then it corresponds to the classical heat equation. Let $(u_0,\,u_1)$ be given data to \eqref{equation1.2}. If we can find an appropriate data $v_0$ to \eqref{equation1.5} such that the difference of the corresponding solutions $u(t,\cdot)- v(t,\cdot)$ possesses a decay rate as $t \to \ity$ in a suitable norm, then one says that the asymptotic behavior of both the solutions is the same for large time. This effect is the so-called diffusion phenomenon.
\par The application of partial Fourier transform to (\ref{equation1.5}) leads to
\begin{equation}
\begin{cases}
\hat{v}_t+ |\xi|^{2\sigma} \hat{v}= 0, \\
\hat{v}(0,\xi)= \widehat{v_0}(\xi).
\end{cases}
\label{equation2.3.2}
\end{equation}
Hence, the solution to (\ref{equation2.3.2}) is written by the formula
\begin{equation}
\hat{v}(t,\xi)= e^{-t|\xi|^{2\sigma}}\widehat{v_0}(\xi).
\label{equation2.3.3}
\end{equation}
Recalling the abbreviation $G_\sigma(t,x)$ gives
\begin{equation}
v(t,x)= G_\sigma(t,x) \ast_x v_0(x).
\label{equation2.3.4}
\end{equation}
Because of the presence of the frictional damping term in \eqref{equation1.2}, considering large frequencies $|\xi|$ and large time $t$ we can conclude some exponential decay estimates for the solutions to \eqref{equation1.2} as we derived \eqref{pro2.1.5}. Moreover, it holds that we may arrive at an exponential decay for the solutions to \eqref{equation1.5} for large frequencies $|\xi|$ and for large times $t$. This means that the difference between the solutions to \eqref{equation1.2} and \eqref{equation1.5} decays. Hence, the obtained decay rate of the difference is optimal. For this reason, it is suitable to focus our attentions on estimates for the difference localized to small frequencies.
\par Our approach to prove Theorem \ref{theorem2.1} is based on applying the following two auxiliary results. The first one is a result for radial convolution kernels.
\bbd[Lemma 3.1 in \cite{DabbiccoEbert2014}] \label{lemma2.3}
Let $K(t,x)$ be a radial convolution kernel of the form
$$ K(t,x) \ast_{(x)} h(x):= F^{-1}\big(f(|\xi|)\,e^{-g(|\xi|)t}\,\hat{h}(\xi)\big), $$
with compactly supported $h$, where $f$ and $g$ satisfy the following conditions:
\begin{align*}
\big|f^{(k)}(\rho)\big|&\lesssim \rho^{\alpha-k}, \\
\big|g^{(k)}(\rho)\big|&\lesssim \rho^{-k}g(\rho), \\
g(\rho)&\approx \rho^{\beta},  
\end{align*}
for some $\alpha> -1$, $\beta>0$ and $k\le [(n+3)/2]$. Then, it holds
$$ \|K_{\text{\fontshape{n}\selectfont low}}(t,\cdot) \ast_{(x)} h\|_{L^q} \lesssim (1+t)^{-\frac{n}{\beta}(\frac{1}{p}- \frac{1}{q})- \frac{\alpha}{\beta}}\|h\|_{L^p}, $$
provided that for any
$$\begin{cases}
1\le p\le q\le \ity &\text{ if }\,\,\, \alpha>0 \text{ or } f \text{ is a nonzero constant}, \\
1\le p< q\le \ity &\text{ if }\,\,\, \alpha=0 \text{ and } f \text{ is not constant}, \\
1\le p\le q\le \ity &\text{ if }\,\,\, -1<\alpha<0 \text{ such that } \frac{1}{p}- \frac{1}{q}\ge \frac{-\alpha}{n}.
\end{cases}$$
\ebd
The second result is related to $L^r$ estimates for multipliers.
\bbd \label{lemma2.4}
Let $n\ge 1$ and $r\in [1,\ity]$. Then, for all $a>0$ it holds
\begin{equation}
K:= \mathcal{F}^{-1}\Big(\frac{|\xi|^a}{1-|\xi|^{2\sigma}}\chi(|\xi|)\Big) \in L^r.
\label{lem2.4.1}
\end{equation}
\ebd
\begin{proof}
First it is clear to see that $K \in L^{\ity}$. For this reason, in order to prove $K \in L^r$ for all $r\in [1,\ity]$, we only indicate $K \in L^1$ and apply an interpolation argument. Indeed, we shall split our considerations into two cases. In the first case of $|x|\le 1$, it is obvious to conclude the desired statement. Let us devote to the second case of $|x|\ge 1$. Because the function in the parenthesis in (\ref{lem2.4.1}) is radially symmetric with respect to $\xi$, the inverse Fourier transform is radially symmetric with respect to $x$, too. Applying the modified Bessel functions from Proposition \ref{PropertiesModifiedBesselfunctions} we obtain
\begin{equation}
K(x)= c\int_0^\ity \frac{r^a}{1-r^{2\sigma}}\chi(r) r^{n-1} \tilde{J}_{\frac{n}{2}-1}(r|x|) dr. \label{lem2.4.2}
\end{equation}
Let us consider odd spatial dimensions $n=2m+1,\, m \ge 1$. Introducing the vector field $Xf(r):= \frac{d}{dr}\big(\frac{1}{r}f(r)\big)$ and carrying out $m+1$ steps of partial integration we derive
\begin{equation}
K(x)= -\frac{c}{|x|^{n}}\int_0^\ity \partial_r \Big(X^m \Big( \frac{1}{1-r^{2\sigma}} \chi(r) r^{a+2m}\Big)\Big) \sin(r|x|)dr. \label{lem2.4.3}
\end{equation}
A straightforward computation gives
\begin{align*}
K(x)&= \sum_{j=0}^m \sum_{k=0}^{j+1}\frac{c_{jk}}{|x|^{n}}\int_0^\ity \partial_r^{j+1-k} \Big(\frac{1}{1-r^{2\sigma}}\Big)\, \chi^{(k)}(r) r^{a+j} \sin(r|x|)dr\\
&\quad + \sum_{j=0}^m \sum_{k=0}^j \frac{c_{jk}}{|x|^{n}}\int_0^\ity \partial_r^{j-k} \Big(\frac{1}{1-r^{2\sigma}}\Big)\, \chi^{(k+1)}(r) r^{a+j} \sin(r|x|)dr\\
&\quad + \sum_{j=1}^m \sum_{k=0}^j \frac{c_{jk}}{|x|^{n}}\int_0^\ity \partial_r^{j-k} \Big(\frac{1}{1-r^{2\sigma}}\Big)\, \chi^{(k)}(r) r^{a+j-1} \sin(r|x|)dr
\end{align*}
with some constants $c_{jk}$. Hence, it is reasonable to estimate the integrals
\begin{equation}
K_{j,k}(x):= \int_0^\ity \partial_r^{j+1-k} \Big(\frac{1}{1-r^{2\sigma}}\Big)\, \chi^{(k)}(r) r^{a+j} \sin(r|x|)dr. \label{lem2.4.4}
\end{equation}
For $k\ge 1$, due to the fact that for all $l\ge 0$
$$ \Big|\partial_r^{l} \Big(\frac{1}{1-r^{2\sigma}}\Big)\Big| \le C_l $$
on the support of the derivatives of $\chi$, we perform one more step of partial integration to get
\begin{equation}
K_{j,k}(x)|\lesssim |x|^{-(n+1)}.
\label{lem2.4.5}
\end{equation}
For $k=0$, because of the small values of $r$, we arrive at
$$ \Big|\partial_r^{l} \Big(\frac{1}{1-r^{2\sigma}}\Big)\Big| \lesssim
\begin{cases}
1 &\text{ if }\,\,\, l=0, \\
r^{2\sigma-l} &\text{ if }\,\,\, l\ge 1,
\end{cases}  $$
on the support of $\chi(r)$. Consequently, it deduces for small $r$ and $j= 0,\cdots,m$ the estimates
$$\Big|\partial_r^{j+1} \Big(\frac{1}{1-r^{2\sigma}}\Big)\, \chi(r) r^{a+j}\Big| \lesssim r^{a+2\sigma-1} $$
on the support of $\chi(r)$. By dividing the integral (\ref{lem2.4.4})  into two parts, on the one hand, we have
\begin{equation}
\Big |\int_0^{\frac{\pi}{2|x|}} \partial_r^{j+1} \Big(\frac{1}{1-r^{2\sigma}}\Big)\, \chi(r) r^{a+j} \sin(r|x|)dr \Big| \lesssim \frac{1}{|x|^{a+2\sigma}}. \label{lem2.4.6}
\end{equation}
On the other hand, after carrying out one more step of partial integration in the remaining integral we can proceed as follows:
\begin{align}
&\Big|\int_{\frac{\pi}{2|x|}}^\ity \partial_r^{j+1} \Big(\frac{1}{1-r^{2\sigma}}\Big)\, \chi(r) r^{a+j} \sin(r|x|)dr \Big| \nonumber \\
&\qquad \lesssim \frac{1}{|x|}\Big|\partial_r^{j+1} \Big(\frac{1}{1-r^{2\sigma}}\Big)\, \chi(r) r^{a+j} \cos(r|x|) \Big|_{r=\frac{\pi}{2|x|}}^\ity \nonumber \\
&\qquad \quad+ \frac{1}{|x|}\int_{\frac{\pi}{2|x|}}^\ity \Big|\partial_r \Big( \partial_r^{j+1} \Big(\frac{1}{1-r^{2\sigma}}\Big)\, \chi(r) r^{a+j}\Big) \cos(r|x|)\Big| \,dr \lesssim \frac{1}{|x|}\int_{\frac{\pi}{2|x|}}^1 r^{a+2\sigma-2}\,dr \lesssim \frac{1}{|x|}. \label{lem2.4.7}
\end{align}
Here we notice that for all $j= 0,\cdots,m$ and for small $|\xi|$ it holds
$$\Big|\partial_r \Big( \partial_r^{j+1} \Big(\frac{1}{1-r^{2\sigma}}\Big)\, \chi(r) r^{a+j}\Big)\Big| \lesssim r^{a+2\sigma-2}. $$
From (\ref{lem2.4.3}) to (\ref{lem2.4.7}) we have produced term $|x|^{-(n+1)}$ which guarantees the $L^1$ property in $x$. Therefore, we may conclude $K \in L^1$ for all $n=2m+1$. \medskip

\noindent Let us consider even spatial dimensions $n=2m,\, m \ge 1$. After carrying out $m-1$ steps of partial integration we derive
\begin{align}
K(x)&= \frac{c}{|x|^{2m-2}}\int_0^\ity X^{m-1}\Big( \frac{1}{1-r^{2\sigma}} \chi(r) r^{a+2m-1} \Big) \tilde{J}_0(r|x|) dr  \nonumber \\
&= \sum_{j=0}^{m-1}\frac{c_j}{|x|^{2m-2}}\int_0^\ity \partial_r^j \Big( \frac{1}{1-r^{2\sigma}} \chi(r) r^a \Big) r^{j+1} \tilde{J}_0(r|x|) dr =:\sum_{j=0}^{m-1} c_j K_j(x). \label{lem2.4.8}
\end{align}
Using the first rule of the modified Bessel functions for $\mu=1$ and the fifth rule for $\mu=0$ from Proposition \ref{PropertiesModifiedBesselfunctions}, after two more steps of partial integration we arrive at
\begin{equation}
K_0(x)= -\frac{1}{|x|^{n}}\int_0^1 \partial_r \Big( \partial_r \Big( \frac{1}{1-r^{2\sigma}} \chi(r) r^a \Big) r \Big) \tilde{J}_0(r|x|) dr. \label{lem2.4.9}
\end{equation}
Due to small $r$, it implies the following inequality:
$$ \Big|\partial_r \Big( \partial_r \Big( \frac{1}{1-r^{2\sigma}} \chi(r) r^a \Big) r \Big)\Big| \lesssim r^{a-1} $$
on the support of $\chi(r)$. By the aid of the estimates $|\tilde{J}_0(s)| \le C$ for $s \in [0,1]$, and $|\tilde{J}_0(s)| \le Cs^{-\frac{1}{2}}$ for $s>1$, we get
\begin{equation}
\Big| \int_0^{\frac{1}{|x|}} \partial_r \Big( \partial_r \Big( \frac{1}{1-r^{2\sigma}} \chi(r) r^a \Big) r \Big) \tilde{J}_0(r|x|) dr \Big| \lesssim \int_0^{\frac{1}{|x|}} r^{a-1} dr \lesssim \frac{1}{|x|^a},
\label{lem2.4.10}
\end{equation}
and
\begin{align}
&\Big| \int_{\frac{1}{|x|}}^1 \partial_r \Big( \partial_r \Big( \frac{1}{1-r^{2\sigma}} \chi(r) r^a \Big) r \Big) \tilde{J}_0(r|x|) dr \Big| \nonumber \\ 
&\qquad \lesssim \frac{1}{|x|^{\frac{1}{2}}}\int_{\frac{1}{|x|}}^1 r^{a-\frac{3}{2}} dr \lesssim
\begin{cases}
\frac{1}{|x|^{\frac{1}{2}}}\Big(1+ \frac{1}{|x|^{a- \frac{1}{2}}}\Big) &\text{ if }\,\,\, a \neq \frac{1}{2} \\
\frac{1}{|x|^{\frac{1}{2}}}\log(|x|) &\text{ if }\,\,\, a= \frac{1}{2}
\end{cases}
\,\,\lesssim \frac{1}{|x|^{\e}}, \label{lem2.4.11}
\end{align}
with a sufficiently small positive constant $\e$, respectively. As a result, from (\ref{lem2.4.9}) to (\ref{lem2.4.11}) we obtain $K_0 \in L^1$. Let $j \in [1,m-1]$ be an integer. By applying again the first rule of the modified Bessel functions for $\mu=1$ and the fifth rule for $\mu=0$ from Proposition \ref{PropertiesModifiedBesselfunctions} and carrying out partial integration we can re-write $K_j(x)$ in (\ref{lem2.4.8}) as follows:
\begin{align*}
K_j(x)= &-\frac{1}{|x|^{2m}}\int_0^\ity \partial_r \Big( \partial_r^{j+1}\Big( \frac{1}{1-r^{2\sigma}} \chi(r) r^a \Big) r^{j+1}\Big) \tilde{J}_0(r|x|) dr\\
&-\frac{j}{|x|^{2m}}\int_0^\ity \partial_r \Big( \partial_r^j \Big( \frac{1}{1-r^{2\sigma}} \chi(r) r^a \Big) r^j \Big) \tilde{J}_0(r|x|) dr.
\end{align*}
Repeating an analogous treatment as we did for $K_0=K_0(x)$ we derive $K_j \in L^1$ for $j=1,\cdots,m-1$. Therefore, we may conclude the desired estimate $K \in L^1$ for all $n=2m$. Summarizing, this completes the proof of Lemma \ref{lemma2.4}.
\end{proof}

\begin{proof}[Proof of Theorem \ref{theorem2.1}]
Thanks to the solution formulas \eqref{equation2.4} and \eqref{equation2.3.3}, we obtain
\small
\begin{align}
\big\|\partial_t^j |D|^a \big(u_{\text{\fontshape{n}\selectfont low}}(t,\cdot)- v_{\text{\fontshape{n}\selectfont low}}(t,\cdot)\big)\big\|_{L^q}&= \big\|\mathcal{F}^{-1}\big(|\xi|^a \partial_t^j \big(\hat{u}(t,\xi)- \hat{v}(t,\xi)\big)\chi(|\xi|)\big)\big\|_{L^q} \nonumber \\ 
&\lesssim e^{-t}\,\Big\|\mathcal{F}^{-1}\Big(\frac{|\xi|^{a+2\sigma}}{1- |\xi|^{2\sigma}}\chi(|\xi|)\widehat{u_0}(|\xi|)+ \frac{|\xi|^a}{1- |\xi|^{2\sigma}}\chi(|\xi|)\widehat{u_1}(|\xi|)\Big)\Big\|_{L^q} \label{pro2.4.1} \\
&\quad+ \Big\|\mathcal{F}^{-1}\Big(e^{-t|\xi|^{2\sigma}}\frac{|\xi|^{a+2(j+1)\sigma}}{1-|\xi|^{2\sigma}}\chi(|\xi|)\big(\widehat{u_0}(|\xi|)+\widehat{u_1}(|\xi|)\big)\Big)\Big\|_{L^q}. \label{pro2.4.2}
\end{align}
\normalsize
Applying Young's convolution inequality and Lemma \ref{lemma2.4} we can proceed (\ref{pro2.4.1}) as follows:
\begin{equation}
e^{-t}\,\Big\|\mathcal{F}^{-1}\Big(\frac{|\xi|^{a+2\sigma}}{1- |\xi|^{2\sigma}}\chi(|\xi|)\widehat{u_0}(|\xi|)\Big)\Big\|_{L^q}\lesssim e^{-t}\,\Big\|\mathcal{F}^{-1}\Big(\frac{|\xi|^{a+2\sigma}}{1- |\xi|^{2\sigma}}\chi(|\xi|)\Big)\Big\|_{L^{r_1}}\, \|u_0\|_{L^p}\lesssim e^{-t} \|u_0\|_{L^p},
\label{pro2.4.3}
\end{equation}
where $r_1 \in [1,\ity]$ fulfills $\frac{1}{r_1}+\frac{1}{p}= 1+\frac{1}{q}$, and
\begin{align}
e^{-t}\,\Big\|\mathcal{F}^{-1}\Big(\frac{|\xi|^a}{1- |\xi|^{2\sigma}}\chi(|\xi|)\widehat{u_1}(|\xi|)\Big)\Big\|_{L^q}
&\lesssim \begin{cases}
e^{-t}\,\Big\|\mathcal{F}^{-1}\Big(\frac{|\xi|^a}{1- |\xi|^{2\sigma}}\chi(|\xi|)\Big)\Big\|_{L^{r_1}}\, \|u_1\|_{L^{p}} &\text{ if }\,\,\, a>0 \\
e^{-t}\,\Big\|\mathcal{F}^{-1}\Big(\frac{|\xi|^{\e}}{1- |\xi|^{2\sigma}}\chi(|\xi|)\Big)\Big\|_{L^{r_2}}\, \big\||\xi|^{-\e} u_1\big\|_{L^{p^*}} &\text{ if }\,\,\, a=0
\end{cases} \nonumber \\
&\lesssim e^{-t} \|u_1\|_{L^p}, \label{pro2.4.3}
\end{align}
where $\e$ is a sufficiently small positive constant and $r_2 \in [1,\ity]$ satisfies $1+\frac{1}{q}= \frac{1}{r_2}+ \frac{1}{p^*}$. Here we used the property of the normalized Riez potential in Remark \ref{remark2.3.1} below. In order to control (\ref{pro2.4.2}), we re-write
$$ \frac{1}{1- |\xi|^{2\sigma}}= \sum_{k=0}^\ity |\xi|^{2k\sigma} $$
due to the small value of $|\xi|$. Hence, using Lemma \ref{lemma2.3} we arrive at the following estimate:
\begin{align}
&\Big\|\mathcal{F}^{-1}\Big(e^{-t|\xi|^{2\sigma}}\frac{|\xi|^{a+2(j+1)\sigma}}{1-|\xi|^{2\sigma}}\chi(|\xi|)\big(\widehat{u_0}(|\xi|)+\widehat{u_1}(|\xi|)\big)\Big)\Big\|_{L^q} \nonumber \\ 
&\qquad \lesssim \sum_{k=0}^\ity \Big\|\mathcal{F}^{-1}\Big(|\xi|^{a+2(k+j+1)\sigma} e^{-t|\xi|^{2\sigma}} \chi(|\xi|)\big(\widehat{u_0}(|\xi|)+\widehat{u_1}(|\xi|)\big)\Big)\Big\|_{L^q} \nonumber \\
&\qquad \lesssim \sum_{k=0}^\ity (1+t)^{-\frac{n}{2\sigma}(\frac{1}{p}- \frac{1}{q})- \frac{a}{2\sigma}-(k+j+1)}\|u_0+ u_1\|_{L^p}\lesssim (1+t)^{-\frac{n}{2\sigma}(\frac{1}{p}- \frac{1}{q})- \frac{a}{2\sigma}-j-1}\|u_0+ u_1\|_{L^p} \label{pro2.4.4}
\end{align}
for large $t$. Therefore, from (\ref{pro2.4.1}) to (\ref{pro2.4.4}) we may conclude the desired estimates. This completes the proof of Theorem \ref{theorem2.1}.
\end{proof}

\begin{nx}
\fontshape{n}
\selectfont
Here we want to underline that in the proof of (\ref{pro2.4.3}) we used the property of the normalized Riez potential (see more Remark 2.1 in \cite{DabbiccoEbert2014})
$$ I_\e f(x):= \mathcal{F}^{-1}\big(|\xi|^{-\e}\hat{f}(\xi)\big)= C_{n,\e}\int_{\R^n}\frac{f(y)}{|x-y|^{n-\e}}dy, $$
where $\e \in (0,n)$. In particular, if $f \in L^p$ for some $p \in \big(1,\frac{n}{\e}\big)$, then the following properties hold:
$$ I_\e f \in L^{p^*} \quad \text{ and } \quad \|I_\e f\|_{L^{p^*}} \lesssim \|f\|_{L^p}, \quad \text{ where }\quad \frac{1}{p}- \frac{1}{p^*}= \frac{\e}{n}. $$
\label{remark2.3.1}
\end{nx}

%=================================================================================={Semi-linear estimates}
\section{Treatment of the corresponding semi-linear model} \label{Semi-linear estimates}
\subsection{Global (in time) existence of the solution} \label{Global existence}
\begin{proof}[Proof of Theorem \ref{theorem3.1}]
We choose introduce the solution space
$$X(t):= C([0,t],H^{\sigma}) \cap C^1([0,t],L^2), $$
with the norm
\begin{align*}
\|u\|_{X(t)}:= \sup_{0\le \tau \le t} \Big( (1+\tau)^{\frac{n}{2\sigma}(\frac{1}{m}-\frac{1}{2})}\|u(\tau,\cdot)\|_{L^2} &+ (1+\tau)^{\frac{n}{2\sigma}(\frac{1}{m}-\frac{1}{2})+ \frac{1}{2}}\big\||D|^{\sigma} u(\tau,\cdot)\big\|_{L^2} \\ 
&+ (1+\tau)^{\frac{n}{2\sigma}(\frac{1}{m}-\frac{1}{2})+ 1}\|u_t(\tau,\cdot)\|_{L^2}\Big).
\end{align*}
By recalling the fundamental solutions from (\ref{equation2.5}), we can write the solutions of the corresponding linear Cauchy problems with vanishing right-hand sides to (\ref{equation1.1}) as follows:
$$ u^{ln}(t,x)= K_0(t,x) \ast_{x} u_0(x)+ K_1(t,x) \ast_{x} u_1(x), $$
where
$$ K_0(t,x):= \mathcal{F}^{-1}\Big(\frac{e^{-t|\xi|^{2\sigma}}-|\xi|^{2\sigma} e^{-t}}{1-|\xi|^{2\sigma}}\Big) \quad \text{ and }\quad K_1(t,x):= \mathcal{F}^{-1}\Big(\frac{e^{-t|\xi|^{2\sigma}}-e^{-t}}{1-|\xi|^{2\sigma}}\Big). $$
Using Duhamel's principle we get the formal implicit representation of the solutions to (\ref{equation1.1}) in the following form:
$$ u(t,x)= u^{ln}(t,x) + \int_0^t K_1(t-\tau,x) \ast_x |u(\tau,x)|^p d\tau=: u^{ln}(t,x)+ u^{nl}(t,x). $$
We define for all $t>0$ the operator $N: \, u \in X(t) \longrightarrow Nu \in X(t)$ by
$$Nu(t,x)= u^{ln}(t,x)+ u^{nl}(t,x). $$
We will show that the operator $N$ fulfills the following two inequalities:
\begin{align}
\|Nu\|_{X(t)} &\lesssim \|(u_0,u_1)\|_{\mathcal{A}^{\sigma}_m}+ \|u\|^p_{X(t)}, \label{the3.1.1}\\
\|Nu- Nv\|_{X(t)} &\lesssim \|u- v\|_{X(t)} \big(\|u\|^{p-1}_{X(t)}+ \|v\|^{p-1}_{X(t)}\big). \label{the3.1.2}
\end{align}
After that, applying Banach's fixed point theorem we gain local (in time) existence results of large data solutions and global (in time) existence results of small data solutions as well. \\
From the definition of the norm in $X(t)$, by plugging $a=0$, $a=\sigma$ and $j=0,\,1$ into the statements from Proposition \ref{proposition2.1} we arrive at
\begin{equation}
\big\|u^{ln}\big\|_{X(t)} \lesssim \|(u_0,u_1)\|_{\mathcal{A}^{\sigma}_{m}}. \label{the3.1.3}
\end{equation}
Thus, it is reasonable to indicate the following inequality instead of (\ref{the3.1.1}):
\begin{equation}
\|u^{nl}\|_{X(t)} \lesssim \|u\|^p_{X(t)}. \label{the3.1.4}
\end{equation}
At the first stage, let us prove the inequality (\ref{the3.1.4}). In order to deal with some estimates for $u^{nl}$, we use the $(L^m \cap L^2)- L^2$ estimates if $\tau \in [0,t/2]$ and the $L^2-L^2$ estimates if $\tau \in [t/2,t]$ from Proposition \ref{proposition2.1}. As a result, we derive the following estimates for $k=0,\,1$:
\begin{align*}
\big\||D|^{k\sigma} u^{nl}(t,\cdot)\big\|_{L^2} &\lesssim \int_0^{t/2}(1+t-\tau)^{-\frac{n}{2\sigma}(\frac{1}{m}-\frac{1}{2})- \frac{k}{2}}\big\||u(\tau,\cdot)|^p\big\|_{L^m \cap L^2}d\tau\\
&\qquad + \int_{t/2}^t (1+t-\tau)^{-\frac{k}{2}}\big\||u(\tau,\cdot)|^p\big\|_{L^2}d\tau.
\end{align*}
Hence, it is clear to estimate $|u(\tau,x)|^p$ in $L^m \cap L^2$ and $L^2$. Namely, we can proceed as follows:
$$\big\||u(\tau,\cdot)|^p\big\|_{L^m \cap L^2} \lesssim \|u(\tau,\cdot)\|^p_{L^{mp}}+ \|u(\tau,\cdot)\|^p_{L^{2p}} \quad \text{ and } \quad \big\||u(\tau,\cdot)|^p\big\|_{L^2}= \|u(\tau,\cdot)\|^p_{L^{2p}}.$$
By applying the fractional Gagliardo-Nirenberg inequality from Proposition \ref{fractionalGagliardoNirenberg} we obtain
\begin{align}
\big\||u(\tau,\cdot)|^p\big\|_{L^m \cap L^2} &\lesssim (1+\tau)^{-\frac{n}{2m\sigma}(p-1)}\|u\|^p_{X(\tau)}, \label{the3.1.5} \\
\big\||u(\tau,\cdot)|^p\big\|_{L^2} &\lesssim (1+\tau)^{-\frac{n}{2m\sigma}(p-\frac{m}{2})}\|u\|^p_{X(\tau)}, \label{the3.1.6}
\end{align}
provided that the conditions (\ref{GN1A1}) and (\ref{GN1A2}) are satisfied. Consequently, we arrive at
\begin{align*}
\big\||D|^{k\sigma} u^{nl}(t,\cdot)\big\|_{L^2} &\lesssim (1+t)^{-\frac{n}{2\sigma}(\frac{1}{m}-\frac{1}{2})- \frac{k}{2}}\|u\|^p_{X(t)} \int_0^{t/2}(1+\tau)^{-\frac{n}{2m\sigma}(p-1)} d\tau\\
&\qquad + (1+t)^{-\frac{n}{2m\sigma}(p-\frac{m}{2})}\|u\|^p_{X(t)} \int_{t/2}^t (1+t-\tau)^{-\frac{k}{2}}d\tau.
\end{align*}
Here we notice that $(1+t-\tau) \approx (1+t)$ if $\tau \in [0,t/2]$ and $(1+\tau) \approx (1+t)$ if $\tau \in [t/2,t]$. Since the condition (\ref{exponent1A}) holds, it is equivalent to $-\frac{n}{2m\sigma}(p-1)< -1$. Moreover, it is clear to see that $-\frac{k}{2}>-1$ for $k=0,\,1$. Therefore, we get
$$ (1+t)^{-\frac{n}{2\sigma}(\frac{1}{m}-\frac{1}{2})- \frac{k}{2}}\int_0^{t/2}(1+\tau)^{-\frac{n}{2m\sigma}(p-1)} d\tau\lesssim (1+t)^{-\frac{n}{2\sigma}(\frac{1}{m}-\frac{1}{2})- \frac{k}{2}}, $$
and
\begin{equation}
(1+t)^{-\frac{n}{2m\sigma}(p-\frac{m}{2})} \int_{t/2}^t (1+t-\tau)^{-\frac{k}{2}}d\tau \lesssim (1+t)^{-\frac{n}{2m\sigma}(p-\frac{m}{2})+1- \frac{k}{2}}\lesssim (1+t)^{-\frac{n}{2\sigma}(\frac{1}{m}-\frac{1}{2})- \frac{k}{2}}. \label{the3.1.7}
\end{equation}
From both the above estimates we may conclude
$$\big\||D|^{k\sigma} u^{nl}(t,\cdot)\big\|_{L^2} \lesssim (1+t)^{-\frac{n}{2\sigma}(\frac{1}{m}-\frac{1}{2})- \frac{k}{2}}\|u\|^p_{X(t)}, $$
for $k=0,\,1$. In the similar way we also derive the following estimate:
\begin{align*}
\big\|\partial_t u^{nl}(t,\cdot)\big\|_{L^2} &\lesssim (1+t)^{-\frac{n}{2\sigma}(\frac{1}{m}-\frac{1}{2})- 1}\|u\|^p_{X(t)} \int_0^{t/2}(1+\tau)^{-\frac{n}{2m\sigma}(p-1)} d\tau\\
&\qquad + (1+t)^{-\frac{n}{2m\sigma}(p-\frac{m}{2})}\|u\|^p_{X(t)} \int_{t/2}^t (1+t-\tau)^{-1}d\tau.
\end{align*}
It is obvious that the first integral will be handled as before. For this reason, we only need to estimate the second one. In particular, we have
\begin{align}
(1+t)^{-\frac{n}{2m\sigma}(p-\frac{m}{2})} \int_{t/2}^t (1+t-\tau)^{-1}d\tau &\lesssim (1+t)^{-\frac{n}{2m\sigma}(p-\frac{m}{2})}\log(1+t) \nonumber \\ 
&\lesssim (1+t)^{-\frac{n}{2m\sigma}(p-\frac{m}{2})+\e} \lesssim (1+t)^{-\frac{n}{2\sigma}(\frac{1}{m}-\frac{1}{2})- 1}. \label{the3.1.8}
\end{align}
Here because of $-\frac{n}{2m\sigma}(p-1)< -1$, we choose a sufficiently small positive number $\e$ satisfying $\e< -1+\frac{n}{2m\sigma}(p-1)$. Therefore, we arrive at
$$ \big\|\partial_t u^{nl}(t,\cdot)\big\|_{L^2} \lesssim (1+t)^{-\frac{n}{2\sigma}(\frac{1}{m}-\frac{1}{2})- 1}\|u\|^p_{X(t)}. $$
From the definition of the norm in $X(t)$, we may conclude immediately the inequality (\ref{the3.1.4}). \medskip

\noindent Next, let us prove the inequality (\ref{the3.1.2}). Taking account of two elements $u$ and $v$ from $X(t)$, we get
$$Nu(t,x)- Nv(t,x)= u^{nl}(t,x)- v^{nl}(t,x). $$
Using again the $(L^m \cap L^2)- L^2$ estimates if $\tau \in [0,t/2]$ and the $L^2-L^2$ estimates if $\tau \in [t/2,t]$ from Proposition \ref{proposition2.1}, we obtain the following estimates:
\begin{align*}
\big\||D|^{k\sigma} (u^{nl}- v^{nl})(t,\cdot)\big\|_{L^2} &\lesssim \int_0^{t/2}(1+t-\tau)^{-\frac{n}{2\sigma}(\frac{1}{m}-\frac{1}{2})- \frac{k}{2}}\big\||u(\tau,\cdot)|^p- v(\tau,\cdot)|^p\big\|_{L^m \cap L^2}d\tau\\
&\qquad+ \int_{t/2}^t (1+t-\tau)^{-\frac{k}{2}}\big\||u(\tau,\cdot)|^p- v(\tau,\cdot)|^p\big\|_{L^2}d\tau,
\end{align*}
and
\begin{align*}
\big\|\partial_t (u^{nl}- v^{nl})(t,\cdot)\big\|_{L^2} &\lesssim \int_0^{t/2}(1+t-\tau)^{-\frac{n}{2\sigma}(\frac{1}{m}-\frac{1}{2})- 1}\big\||u(\tau,\cdot)|^p- v(\tau,\cdot)|^p\big\|_{L^m \cap L^2}d\tau\\
&\qquad + \int_{t/2}^t (1+t-\tau)^{-1}\big\||u(\tau,\cdot)|^p- v(\tau,\cdot)|^p\big\|_{L^2}d\tau.
\end{align*}
Applying H\"{o}lder's inequality leads to
\begin{align*}
\big\||u(\tau,\cdot)|^p- |v(\tau,\cdot)|^p\big\|_{L^2}& \lesssim \|u(\tau,\cdot)- v(\tau,\cdot)\|_{L^{2p}} \big(\|u(\tau,\cdot)\|^{p-1}_{L^{2p}}+ \|v(\tau,\cdot)\|^{p-1}_{L^{2p}}\big),\\
\big\||u(\tau,\cdot)|^p- |v(\tau,\cdot)|^p\big\|_{L^m}& \lesssim \|u(\tau,\cdot)- v(\tau,\cdot)\|_{L^{mp}} \big(\|u(\tau,\cdot)\|^{p-1}_{L^{mp}}+ \|v(\tau,\cdot)\|^{p-1}_{L^{mp}}\big).
\end{align*}
Analogously to the proof of (\ref{the3.1.4}), we employ the fractional Gagliardo-Nirenberg inequality from Proposition \ref{fractionalGagliardoNirenberg} to the terms
$$ \|u(\tau,\cdot)- v(\tau,\cdot)\|_{L^{\eta}},\quad \|u(\tau,\cdot)\|_{L^{\eta}},\quad \|v(\tau,\cdot)\|_{L^{\eta}} $$
with $\eta=2p$ or $\eta=mp$ to conclude the inequality (\ref{the3.1.2}). Summarizing, the proof of Theorem \ref{theorem3.1} is completed.
\end{proof}

%====================================================
\subsection{Large time behavior of the global solution} \label{Large time behavior}
In order to prove Theorem \ref{theorem3.2}, we need some auxiliary estimates as follows:

\bmd \label{proposition2.2.2}
The Sobolev solutions to (\ref{equation1.2}) satisfy the following estimate for $j,\,k=0,\,1$ and $(j,k)\neq (1,1)$:
$$ \big\|\partial_t^j |D|^{k\sigma} \big(u(t,\cdot)- (P_0+P_1)G_\sigma(t,\cdot)\big)\big\|_{L^2} \lesssim (1+t)^{-\frac{n}{4\sigma}-\frac{k}{2}- j- \frac{1}{2\sigma}} \big(\|u_0\|_{L^{1,1} \cap H^\sigma}+ \|u_1\|_{L^{1,1} \cap L^2}\big), $$
for large $t \ge 1$ and for all space dimensions $n\ge 1$.
\emd

\begin{proof}
Following the proof of Corollary \ref{corollary2.2.1} with a minor modification we can conclude the proof of Proposition \ref{proposition2.2.2}.
\end{proof}

\bmd \label{proposition2.2.3}
The Sobolev solutions to (\ref{equation1.2}) satisfy the following estimate for large $t \ge 1$:
\begin{equation}
\big\|\partial_t^j |D|^a \big(u(t,\cdot)- v(t,\cdot)\big)\big\|_{L^2} \lesssim t^{-\frac{n}{4\sigma}-\frac{a}{2\sigma}-j-1} \|(u_0,u_1)\|_{L^1}+ e^{-t} \big(\|u_0\|_{H^a}+ \|u_1\|_{H^{[a-2\sigma]^+}}\big),
\label{pro2.2.3.2}
\end{equation}
for any $a\ge 0$, $j=0,1$ and for all space dimensions $n\ge 1$.
\emd

\begin{proof}
For small frequencies, we can repeat exactly the same way as we did in the proof of Theorem \ref{theorem2.1} to derive
$$ \big\|\partial_t^j |D|^a \big(u_{\text{\fontshape{n}\selectfont low}}(t,\cdot)- v_{\text{\fontshape{n}\selectfont low}}(t,\cdot)\big)\big\|_{L^2} \lesssim (1+t)^{-\frac{n}{4\sigma}- \frac{a}{2\sigma}-j-1}\|(u_0,u_1)\|_{L^1}+ e^{-t}\|(u_0,u_1)\|_{L^2}. $$
Taking account of large frequencies we can proceed as follows:
\begin{align}
&\big\|\partial_t^j |D|^a \big(u_{\text{\fontshape{n}\selectfont high}}(t,\cdot)- v_{\text{\fontshape{n}\selectfont high}}(t,\cdot)\big)\big\|_{L^2} \nonumber \\
&\qquad \lesssim e^{-t}\,\Big\|\frac{|\xi|^{a+2\sigma}}{|\xi|^{2\sigma}-1}\big(1-\chi(|\xi|)\big)\widehat{u_0}(|\xi|)+ \frac{|\xi|^a}{|\xi|^{2\sigma}-1}\big(1-\chi(|\xi|)\big)\widehat{u_1}(|\xi|)\Big\|_{L^2} \nonumber \\
&\qquad \quad + \Big\|e^{-t|\xi|^{2\sigma}}\frac{|\xi|^{a+2(j+1)\sigma}}{|\xi|^{2\sigma}-1}\big(1-\chi(|\xi|)\big)\big(\widehat{u_0}(|\xi|)+\widehat{u_1}(|\xi|)\big)\Big\|_{L^2} \label{pro2.2.3.3} \\
&\qquad \lesssim e^{-t}\,\big\||\xi|^a\widehat{u_0}(|\xi|)+ |\xi|^{[a- 2\sigma]^+}\widehat{u_1}(|\xi|)\big\|_{L^2} \nonumber \\
&\qquad \quad + \big\|e^{-t|\xi|^{2\sigma}}|\xi|^{a+2(j+1)\sigma}\big(1-\chi(|\xi|)\big)\big\|_{L^2}\|\widehat{u_0}+\widehat{u_1}\|_{L^\ity} \label{pro2.2.3.4} \\
&\qquad \lesssim e^{-t}\,\big(\|u_0\|_{H^a}+ \|u_1\|_{H^{[a- 2\sigma]^+}}\big) + t^{-\frac{n}{4\sigma}-\frac{a}{2\sigma}-j-1} \|(u_0,u_1)\|_{L^1} \label{pro2.2.3.5}
\end{align}
Here we notice that we used Parseval-Plancherel formula in (\ref{pro2.2.3.3}). Moreover, for the second term in (\ref{pro2.2.3.4}) and (\ref{pro2.2.3.5}) we applied H\"{o}lder's inequality and Lemma \ref{LemmaL1normEstimate} combined with Young-Hausdorff inequality, respectively. Therefore, from the above estimates we may conclude the desired statement what we wanted to prove.
\end{proof}

\bmd \label{proposition2.1.2}
The Sobolev solutions to (\ref{equation1.2}) satisfy the following estimate for large $t \ge 1$: 
\begin{equation}
\big\|\partial_t^j |D|^a v(t,\cdot)\big\|_{L^2} \lesssim t^{-\frac{n}{4\sigma}- \frac{a}{2\sigma}-j} \|(u_0,u_1)\|_{L^1},
\label{pro2.1.2.2}
\end{equation}
for any $a\ge 0$, $j=0,1$ and for all space dimensions $n\ge 1$. Moreover, the following estimate holds for any $t>0$:
\begin{equation}
\big\|\partial_t^j |D|^a G_\sigma(t,\cdot)\big\|_{L^2} \lesssim t^{-\frac{n}{4\sigma}- \frac{a}{2\sigma}-j},
\label{pro2.1.2.3}
\end{equation}
for any $a\ge 0$, $j=0,1$ and for all space dimensions $n\ge 1$.
\emd

\begin{proof}
To derive (\ref{pro2.1.2.2}), we repeat some arguments as we did in the proof of the second term in (\ref{pro2.2.3.5}). By the aid of Parseval-Plancherel formula and a change of variables when needed, we may conclude the proof of (\ref{pro2.1.2.3}). Hence, this completes the proof of Proposition \ref{proposition2.1.2}.
\end{proof}

\begin{proof}[Proof of Theorem \ref{theorem3.2}]
Thanks to the statement in Proposition \ref{proposition2.2.2}, we only indicate the following estimate in place of (\ref{LargetimeEstimate3.2}):
\begin{align*}
\Big\|\partial_t^j |D|^{k\sigma}\Big(\int_0^t K_1(t-\tau,x) \ast_x |u(\tau,x)|^p d\tau- &\Big(\int_0^\ity \int_{\R^n} |u(,\tau,y)|^p dyd\tau\Big)G_\sigma(t,x)\Big)\Big\|_{L^2} \\ 
&= o\big(t^{-\frac{n}{4\sigma}- \frac{k}{2}-j}\big),
\end{align*}
by recalling the presentation of the solutions $u(t,x)= u^{ln}(t,x)+u^{nl}(t,x)$ to (\ref{equation1.1}) as in Theorem \ref{theorem3.1}. Due to the fact that $K_1(0,x) \ast_x |u(t,x)|^p= 0$, we can re-write the above estimate in the equivalent form
\begin{align}
\Big\|\int_0^t \partial_t^j |D|^{k\sigma}\big(K_1(t-\tau,x) \ast_x |u(\tau,x)|^p\big) d\tau- &\Big(\int_0^\ity \int_{\R^n} |u(\tau,y)|^p dyd\tau\Big)\partial_t^j |D|^{k\sigma}G_\sigma(t,x)\Big\|_{L^2} \nonumber \\ 
&= o\big(t^{-\frac{n}{4\sigma}- \frac{k}{2}-j}\big). \label{the3.2.1}
\end{align}
Now we shall separate the left-hand side term of (\ref{the3.2.1}) in $L^2$ norm into five sub-terms as follows:
\begin{align}
&\int_0^t \partial_t^j |D|^{k\sigma}\big(K_1(t-\tau,x) \ast_x |u(\tau,x)|^p\big) d\tau- \Big(\int_0^\ity \int_{\R^n} |u(\tau,y)|^p dyd\tau\Big)\partial_t^j |D|^{k\sigma}G_\sigma(t,x) \nonumber \\ 
&\qquad= \int_0^{t/2} \partial_t^j |D|^{k\sigma}\Big(\big(K_1(t-\tau,x)- G_\sigma(t-\tau,\cdot)\big) \ast_x |u(\tau,x)|^p\Big) d\tau \nonumber \\
&\qquad \quad + \int_{t/2}^t \partial_t^j |D|^{k\sigma}\big(K_1(t-\tau,x) \ast_x |u(\tau,x)|^p\big) d\tau \nonumber \\
&\qquad \quad + \int_0^{t/2} \partial_t^j |D|^{k\sigma}\Big(\big(G_\sigma(t-\tau,x)- G_\sigma(t,x) \big)\ast_x |u(\tau,x)|^p\Big) d\tau \nonumber \\
&\qquad \quad + \int_0^{t/2} \partial_t^j |D|^{k\sigma}\Big(G_\sigma(t,x)\ast_x |u(\tau,x)|^p - \Big(\int_{\R^n} |u(\tau,y)|^p dy\Big)G_\sigma(t,x) \Big)d\tau \nonumber \\
&\qquad \quad - \Big(\int_{t/2}^\ity \int_{\R^n} |u(\tau,y)|^p dyd\tau\Big)\partial_t^j |D|^{k\sigma}G_\sigma(t,x) \nonumber \\
&\qquad=: I_1(t,x)+ I_2(t,x)+ I_3(t,x)+ I_4(t,x)- I_5(t,x). \label{the3.2.2}
\end{align}
At first, let us estimate $I_1(t,x)$. Namely, applying the statement (\ref{pro2.2.3.2}) leads to
\begin{align}
\|I_1(t,\cdot)\|_{L^2}& \lesssim \int_0^{t/2} \Big\|\partial_t^j |D|^{k\sigma}\Big(\big(K_1(t-\tau,x)- G_\sigma(t-\tau,\cdot)\big) \ast_x |u(\tau,x)|^p\Big)\Big\|_{L^2} d\tau \nonumber \\
&\lesssim \int_0^{t/2} (t-\tau)^{-\frac{n}{4\sigma}-\frac{k}{2}-j-1} \big\||u(\tau,\cdot)|^p\big\|_{L^1}d\tau+ \int_0^{t/2} e^{-(t-\tau)} \big\||u(\tau,\cdot)|^p\big\|_{L^2}d\tau \nonumber \\
&\lesssim t^{-\frac{n}{4\sigma}-\frac{k}{2}-j-1} \int_0^{t/2} (1+\tau)^{-\frac{n}{2\sigma}(p-1)}d\tau+ e^{-t/2} \int_0^{t/2} (1+\tau)^{-\frac{n}{2\sigma}(p-\frac{1}{2})}d\tau \label{the3.2.3} \\
&\lesssim t^{-\frac{n}{4\sigma}-\frac{k}{2}-j-1}+ e^{-t/2}\lesssim t^{-\frac{n}{4\sigma}-\frac{k}{2}-j-1}. \label{the3.2.4}
\end{align}
Here we used (\ref{the3.1.5}), (\ref{the3.1.6}) and the relation $t-\tau \approx t$ if $\tau \in [0,t/2]$ in (\ref{the3.2.3}). Moreover, in order to derive (\ref{the3.2.4}) we notice that the condition (\ref{exponent1A}) implies the integrability of both the above integrals. In the second step, taking account of $I_2(t,x)$ we repeat exactly the arguments as we did in the proofs of (\ref{the3.1.7}) and (\ref{the3.1.8}) to obtain
\begin{equation}
\|I_2(t,\cdot)\|_{L^2}\lesssim (1+t)^{-\frac{n}{4\sigma}- \frac{k}{2}- j-\e}
\label{the3.2.5}
\end{equation}
where $\e$ is a sufficiently small positive satisfying $2\e< -1+\frac{n}{2\sigma}(p-1)$. To control $I_3(t,x)$, by using the mean value theorem on $t$ we get the following representation:
$$ G_\sigma(t-\tau,x)- G_\sigma(t,x)= -\tau\,\partial_t G_\sigma(t- \omega_1 \tau,x) $$
with a constant $\omega_1 \in [0,1]$. Hence, we can proceed as follows:
\begin{align*}
\|I_3(t,\cdot)\|_{L^2} &\lesssim \int_0^{t/2} \Big\|\partial_t^j |D|^{k\sigma}\Big(\big(G_\sigma(t-\tau,x)- G_\sigma(t,x) \big)\ast_x |u(\tau,x)|^p\Big)\Big\|_{L^2} d\tau \\
&\lesssim \int_0^{t/2} \tau\, \big\|\partial_t^{j+1} |D|^{k\sigma}\big(G_\sigma(t- \omega_1 \tau,x)\ast_x |u(\tau,x)|^p\big)\big\|_{L^2} d\tau
\end{align*}
Employing (\ref{pro2.1.2.2}) gives
\begin{align*}
\|I_3(t,\cdot)\|_{L^2}&\lesssim \int_0^{t/2} \tau\,(t- \omega_1 \tau)^{-\frac{n}{4\sigma}- \frac{a}{2\sigma}-j-1} \big\||u(\tau,\cdot)|^p\big\|_{L^1} d\tau \\
&\lesssim t^{-\frac{n}{4\sigma}- \frac{a}{2\sigma}-j-1}\int_0^{t/2} \tau\, (1+\tau)^{-\frac{n}{2\sigma}(p-1)} d\tau,
\end{align*}
where we used (\ref{the3.1.5}) and the relation $t- \omega_1 \tau \approx t$ if $\tau \in [0,t/2]$. Thus, we may arrive at
\begin{align}
\|I_3(t,\cdot)\|_{L^2}&\lesssim t^{-\frac{n}{4\sigma}- \frac{a}{2\sigma}-j-1}\int_0^{t/2} (1+\tau)^{-\frac{n}{2\sigma}(p-1)+1} d\tau \nonumber \\
&\lesssim t^{-\frac{n}{4\sigma}- \frac{a}{2\sigma}-j-1}\Big(1+ (1+t)^{-\frac{n}{2\sigma}(p-1)+2}+ \log(1+t)\Big) \nonumber \\
&\lesssim t^{-\frac{n}{4\sigma}- \frac{a}{2\sigma}-j-1}(1+t)^{-\e+1} \lesssim t^{-\frac{n}{4\sigma}- \frac{a}{2\sigma}-j-\e} \label{the3.2.6}
\end{align}
as $t \to \ity$, with a sufficiently small positive number $\e$ such that $-\frac{n}{2\sigma}(p-1)+1< -\e$. Let us now devote to the estimate for $I_4(t,x)$. To do this, we shall divide our attention into two parts. In particular, we write
\begin{align*}
I_4(t,x)&= \int_0^{t/2} \partial_t^j |D|^{k\sigma}\Big(\int_{\R^n} G_\sigma(t,x-y) |u(\tau,y)|^p dy - \int_{\R^n} G_\sigma(t,x) |u(\tau,y)|^p dy\Big)d\tau \\
&= \int_0^{t/2} \int_{|y|\le t^{\frac{1}{4\sigma}}} \partial_t^j |D|^{k\sigma}\big(G_\sigma(t,x-y)- G_\sigma(t,x)\big) |u(\tau,y)|^p dyd\tau \\
&\qquad+ \int_0^{t/2} \int_{|y|\ge t^{\frac{1}{4\sigma}}} \partial_t^j |D|^{k\sigma}\big(G_\sigma(t,x-y)- G_\sigma(t,x)\big) |u(\tau,y)|^p dy d\tau =: I_{41}(t,x)+ I_{42}(t,x).
\end{align*}
For the first integral $I_{41}(t,x)$, using the mean value theorem on $x$ we derive
$$ G_\sigma(t,x-y)- G_\sigma(t,x)= -y\,\partial_x G_\sigma(t,x- \omega_2 y) $$
with a constant $\omega_2 \in [0,1]$. For this reason, we may conclude the following estimate:
\begin{align}
\|I_{41}(t,\cdot)\|_{L^2}&\lesssim \Big\|\int_0^{t/2} \int_{|y|\le t^{\frac{1}{4\sigma}}} (-y)\,\partial_t^j |D|^{k\sigma+ 1} G_\sigma(t,x- \omega_2 y) |u(\tau,y)|^p dyd\tau \Big\|_{L^2} \nonumber \\
&\lesssim \int_0^{t/2} \int_{|y|\le t^{\frac{1}{4\sigma}}} |y|\,\big\|\partial_t^j |D|^{k\sigma+ 1} G_\sigma(t,\cdot) \big\|_{L^2} |u(\tau,y)|^p dyd\tau \nonumber \\
&\lesssim t^{-\frac{n}{4\sigma}- \frac{k}{2}-j- \frac{1}{2\sigma}} \int_0^{t/2} \int_{|y|\le t^{\frac{1}{4\sigma}}} |y|\,|u(\tau,y)|^p dyd\tau \qquad \big(\text{by } (\ref{pro2.1.2.3})\big) \nonumber \\
&\lesssim t^{-\frac{n}{4\sigma}- \frac{k}{2}-j- \frac{1}{2\sigma}}\int_0^{t/2} t^{\frac{1}{4\sigma}} \big\||u(\tau,\cdot)|^p\big\|_{L^1}d\tau \nonumber \\
&\lesssim t^{-\frac{n}{4\sigma}- \frac{k}{2}-j- \frac{1}{4\sigma}}\int_0^{t/2} (1+\tau)^{-\frac{n}{2\sigma}(p-1)}d\tau \qquad \big(\text{by } (\ref{the3.1.5})\big) \nonumber \\
&\lesssim t^{-\frac{n}{4\sigma}- \frac{k}{2}-j- \frac{1}{4\sigma}}. \label{the3.2.7}
\end{align}
In order to deal with the other interesting integral $I_{42}(t,x)$, we notice that by (\ref{the3.1.5}) again it holds
$$ \int_0^\ity \int_{\R^n}|u(\tau,y)|^p dyd\tau= \int_0^\ity \big\||u(\tau,\cdot)|^p\big\|_{L^1}d\tau\lesssim \int_0^\ity (1+\tau)^{-\frac{n}{2\sigma}(p-1)}d\tau \lesssim 1. $$
As a result, this deduces immediately the relation
$$ \lim_{t \to \ity} \int_0^\ity \int_{|y|\ge t^{\frac{1}{4\sigma}}}|u(\tau,y)|^p dyd\tau= 0, $$
that is, there exist a sufficiently small positive $\e$ such that
$$ \int_0^\ity \int_{|y|\ge t^{\frac{1}{4\sigma}}}|u(\tau,y)|^p dyd\tau \lesssim t^{-\e}, $$
as $t \to \ity$. Therefore, by (\ref{pro2.1.2.3}) we can estimate $I_{42}(t,x)$ in the following way:
\begin{align}
\|I_{42}(t,\cdot)\|_{L^2}&\le 2\int_0^{t/2} \int_{|y|\ge t^{\frac{1}{4\sigma}}} \big\|\partial_t^j |D|^{k\sigma}G_\sigma(t,\cdot)\big\|_{L^2} |u(\tau,y)|^p dy d\tau \nonumber \\ 
&\lesssim t^{-\frac{n}{4\sigma}- \frac{k}{2}-j}\int_0^{t/2} \int_{|y|\ge t^{\frac{1}{4\sigma}}}|u(\tau,y)|^p dy d\tau \lesssim t^{-\frac{n}{4\sigma}- \frac{k}{2}-j-\e}. \label{the3.2.8}
\end{align}
From (\ref{the3.2.7}) and (\ref{the3.2.8}), we arrive at
\begin{equation}
\|I_{42}(t,\cdot)\|_{L^2}\lesssim t^{-\frac{n}{4\sigma}- \frac{k}{2}-j-\e}.
\label{the3.2.9}
\end{equation}
Finally, we need to control $I_5(t,x)$ to complete our proof. For this purpose, by (\ref{pro2.1.2.3}) again we have
\begin{align}
\|I_5(t,\cdot)\|_{L^2}&\lesssim \big\|\partial_t^j |D|^{k\sigma}G_\sigma(t,\cdot)\big\|_{L^2} \int_{t/2}^\ity \int_{\R^n} |u(\tau,y)|^p dyd\tau \nonumber \\
&\lesssim t^{-\frac{n}{4\sigma}- \frac{a}{2\sigma}-j} \int_{t/2}^\ity \big\||u(\tau,\cdot)|^p\big\|_{L^1} d\tau \lesssim t^{-\frac{n}{4\sigma}- \frac{a}{2\sigma}-j} \int_{t/2}^\ity (1+\tau)^{-\frac{n}{2\sigma}(p-1)} d\tau \nonumber \\ 
&\lesssim t^{-\frac{n}{4\sigma}- \frac{a}{2\sigma}-j} (1+t)^{-\frac{n}{2\sigma}(p-1)+1} \lesssim t^{-\frac{n}{4\sigma}- \frac{a}{2\sigma}-j-\e} \label{the3.2.10}
\end{align}
as $t \to \ity$ and $\e$ is again chosen as a sufficiently small positive to guarantee $-\frac{n}{2\sigma}(p-1)+1< -\e$. Consequently, combining (\ref{the3.2.2}) to (\ref{the3.2.10}) we may conclude (\ref{the3.2.1}). Summarizing, Theorem \ref{theorem3.2} is proved.
\end{proof}

%=================================================================================={Blow-up result}
\section{Blow-up result} \label{Blow-up result}
In this section, our aim is to verify the critical exponent to (\ref{equation1.1}). To state our result, we recall the definition of the weak solution to (\ref{equation1.1}) (see, for instance, \cite{IkehataTakeda2017}).
\begin{dn} \label{defweaksolution}
Let $p>1$ and $T>0$. We say that $u \in L^p_{\text{\fontshape{n}\selectfont loc}}([0,T)\times \R^n)$ is a local weak solution to (\ref{equation1.1}) if for any test function $\phi(t,x) \in \mathcal{C}_0^\ity([0,T)\times \R^n)$ it holds
\begin{align}
&\int_0^T \int_{\R^n}|u(t,x)|^p \phi(t,x)dxdt+ \int_{\R^n}u_0(x)\big(\phi(0,x)+ (-\Delta)^{\sigma}\phi(0,x)- \phi_t(0,x)\big)dx+ \int_{\R^n} u_1(x) \phi(0,x)dx \nonumber \\
&\qquad= \int_0^T \int_{\R^n}u(t,x)\big(\phi_{tt}(t,x)- (-\Delta)^{\sigma}\phi_t(t,x)+ (-\Delta)^{\sigma}\phi(t,x)- \phi_t(t,x) \big)dxdt. \label{ptweaksolution}
\end{align}
\end{dn}
If $T= \ity$, we say that $u$ is a global weak solution to (\ref{equation1.1}).

The main ideas of our proof of blow-up result are based on a contradiction argument by using the test function method (see, for example, \cite{DabbiccoEbert2017,Zhang}). Due to the fact that this method, in general, cannot be directly applied to the fractional Laplacian operators $(-\Delta)^\sigma$ as well-known non-local operators, the assumption for integer $\sigma$ comes into play in our proof.
\begin{proof}[Proof of Theorem \ref{dloptimal4.1}]
At first, let us introduce the test functions $\eta= \eta(t)$ and $\varphi=\varphi(x)$ satisfying the following properties:
\begin{align}
&1.\quad \eta \in \mathcal{C}_0^\ity([0,\ity)) \text{ and }
\eta(t)=\begin{cases}
1 \quad \text{ for }0 \le t \le 1/2, \\
0 \quad \text{ for }t \ge 1,
\end{cases} & \nonumber \\ 
&2.\quad \varphi \in \mathcal{C}_0^\ity(\R^n) \text{ and }
\varphi(x)= \begin{cases}
1 \quad \text{ for } |x|\le 1/2, \\
0 \quad \text{ for }|x|\ge 1,
\end{cases} & \nonumber \\
&3.\quad \eta^{-\frac{p'}{p}}\big(|\eta'|^{p'}+|\eta''|^{p'}\big) \text{ and } \varphi^{-\frac{p'}{p}} |\Delta^{\sigma}\varphi|^{p'} \text{ are bounded, } & \label{condition3}
\end{align}
where $p'$ is the conjugate of $p$. In addtion, we suppose that $\eta(t)$ is a decreasing function and that $\varphi=\varphi(|x|)$ is a radial function fulfilling $\varphi(|x|) \le \varphi(|y|)$ for any $|x|\ge |y|$.\medskip

\noindent Let $R$ be a large parameter in $[0,\ity)$. We define the following test function:
$$ \phi_R(t,x):= \eta_R(t) \varphi_R(x), $$
where $\eta_R(t):= \eta(R^{-2\sigma}t)$ and $\varphi_R(x):= \varphi(R^{-1}x)$. Moreover, we define the funtional
$$ I_R:= \int_0^{\ity}\int_{\R^n}|u(t,x)|^p \phi_R(t,x) dxdt= \int_{Q_R}|u(t,x)|^p \phi_R(t,x) d(x,t), $$
where $$Q_R:= [0,R^{2\sigma}] \times B_R,\qquad B_R:= \big\{x\in \R^n: |x|\le R \big\}. $$
Let us now assume that $u= u(t,x)$ is a global (in time) energy solution to (\ref{equation1.1}). Replacing $\phi(t,x)= \phi_R(t,x)$ in (\ref{ptweaksolution}) we arrive at
\begin{align} 
&I_R+ \int_{B_R} u_1(x) \varphi_R(x)dx \nonumber \\
&\,\,\,= \int_{Q_R}u(t,x) \Big(\eta''_R(t) \varphi_R(x)- \eta'_R(t) (-\Delta)^{\sigma}\varphi_R(x)+ \eta_R(t) (-\Delta)^{\sigma}\varphi_R(x)- \eta'_R(t) \varphi_R(x)\Big)d(x,t). \label{the4.1.1}
\end{align}
By applying H\"{o}lder's inequality with $\frac{1}{p}+\frac{1}{p'}=1$ we obtain
\begin{align*}
&\int_{Q_R} |u(t,x)|\, \big|\eta''_R(t) \varphi_R(x)\big| d(x,t) \\
&\qquad \le \Big(\int_{Q_R} \Big|u(t,x)\phi^{\frac{1}{p}}_R(t,x)\Big|^p d(x,t)\Big)^{\frac{1}{p}} \Big(\int_{Q_R} \Big|\phi^{-\frac{1}{p}}_R(t,x) \eta''_R(t) \varphi_R(x)\Big|^{p'} d(x,t)\Big)^{\frac{1}{p'}} \\
&\qquad \le I_R^{\frac{1}{p}} \Big( \int_{Q_R}\eta_R^{-\frac{p'}{p}}(t) \big|\eta''_R(t)\big|^{p'} \varphi_R(x) d(x,t)\Big)^{\frac{1}{p'}}.
\end{align*}
Using the change of variables $\tilde{t}:= R^{-2\sigma}t$ and $\tilde{x}:= R^{-1}x$ gives
\begin{equation} \label{the4.1.2}
\int_{Q_R} |u(t,x)|\, \big|\eta''_R(t) \varphi_R(x)\big| d(x,t) \lesssim I_R^{\frac{1}{p}}\, R^{-4\sigma+ \frac{n+2\sigma}{p'}}. 
\end{equation}
Here we notice that $ \eta''_R(t)= R^{-4\sigma}\eta''(\tilde{t})$ and the assumption (\ref{condition3}) holds. In the analogous treament, we also derive the following estimates:
\begin{align}
\int_{Q_R}|u(t,x)|\, \big|\eta'_R(t) (-\Delta)^{\sigma}\varphi_R(x)\big| d(x,t) &\lesssim I_R^{\frac{1}{p}}\, R^{-4\sigma+ \frac{n+2\sigma}{p'}}, \label{the4.1.3} \\ 
\int_{Q_R}|u(t,x)|\, \big|\eta_R(t) (-\Delta)^{\sigma}\varphi_R(x)\big| d(x,t) &\lesssim I_R^{\frac{1}{p}}\, R^{-2\sigma+ \frac{n+2\sigma}{p'}}, \label{the4.1.4} \\
\int_{Q_R}|u(t,x)|\, \big|\eta'_R(t) \varphi_R(x)\big| d(x,t) &\lesssim I_R^{\frac{1}{p}}\, R^{-2\sigma+ \frac{n+2\sigma}{p'}}, \label{the4.1.5}
\end{align}
where we used
$$ \eta'_R(t)= R^{-2\sigma}\eta'(\tilde{t}) \quad \text{ and }\quad (-\Delta)^{\sigma}\varphi_R(x)= R^{-2\sigma}(-\Delta)^{\sigma}\varphi(\tilde{x}), $$
since $\sigma$ is a integer. Due to the assumption (\ref{optimal1}), there exists a constant $R_0>0$ such that it holds
\begin{equation} \label{the4.1.6}
\int_{B_R} u_1(x) \varphi_R(x)dx >0,
\end{equation}
for any $R>R_0$. As a result, from (\ref{the4.1.1}) to (\ref{the4.1.6}) we may conclude the following estimate:
$$ I_R \lesssim I_R^{\frac{1}{p}}\, R^{-2\sigma+ \frac{n+2\sigma}{p'}}, $$
this is,
\begin{equation} \label{the4.1.7}
I_R^{\frac{1}{p'}} \lesssim R^{-2\sigma+ \frac{n+\alpha}{p'}}.
\end{equation}
It is obvious to see that the assumption (\ref{optimal2}) is re-written in the equivalent form as follows:
$$-2\sigma+ \frac{n+\alpha}{p'} \le 0. $$
Hence, it is reasonable to separate our consideration into two subcases. Taking account of the first subcase $-2\sigma+ \frac{n+\alpha}{p'}< 0$, we let $R \to \ity$ in (\ref{the4.1.7}) to get
$$ \int_0^{\ity}\int_{\R^n}|u(t,x)|^p dxdt= 0, $$
which implies immediately $u \equiv 0$. This is a contradiction to the assumption (\ref{optimal1}). Let us now devote our attention to the second subcase $-2\sigma+ \frac{n+\alpha}{p'}= 0$. By (\ref{the4.1.7}) it follows
$$ I_R= \int_{Q_R}|u(t,x)|^p \phi_R(t,x) d(x,t) \le C, $$
for a sufficiently large $R$ and a suitable positive constant $C$. For this reason, we derive
\begin{equation} \label{the4.1.8}
\int_{\tilde{Q}_R}|u(t,x)|^p \phi_R(t,x) d(x,t) \to 0 \text{ as } R \to \ity,
\end{equation}
where we introduce notations
$$\tilde{Q}_R:= Q_R \setminus \big([0,R^{2\sigma}/2] \times B_{R/2}\big),\qquad B_{R/2}:= \big\{x\in \R^n: 0\le |x|\le R/2 \big\}. $$
Because of $\partial^2_t \phi_R(t,x)= \partial_t\phi_R(t,x)= (-\Delta)^{\sigma}\partial_t\phi_R(t,x)= (-\Delta)^{\sigma}\phi_R(t,x)=0$ in $(\R^1_+ \times \R^n) \setminus \tilde{Q}_R$, we repeat the steps of the proofs from (\ref{the4.1.1}) to (\ref{the4.1.5}) to arrive at the following estimate:
$$ I_R+ \int_{B_R} u_1(x) \varphi_R(x)dx \lesssim \Big(\int_{\tilde{Q}_R}|u(t,x)|^p \phi_R(t,x) d(x,t)\Big)^{\frac{1}{p}}\, R^{-2\sigma+ \frac{n+2\sigma}{p'}}. $$
Due to $-2\sigma+ \frac{n+\alpha}{p'}= 0$, from the above estimate and (\ref{the4.1.6}) we obtain
\begin{equation} \label{the4.1.9}
I_R< I_R+ \int_{B_R} u_1(x) \varphi_R(x)dx \lesssim \Big(\int_{\tilde{Q}_R}|u(t,x)|^p \phi_R(t,x) d(x,t)\Big)^{\frac{1}{p}},
\end{equation}
for any $R>R_0$. By combining (\ref{the4.1.8}) and (\ref{the4.1.9}), we let $R \to \ity$ to conclude
$$ \int_0^{\ity}\int_{\R^n}|u(t,x)|^p dxdt= 0, $$
which is again a contradiction to the assumption (\ref{optimal1}). Summarizing, Theorem \ref{dloptimal4.1} is proved.
\end{proof}

\bnx
\fontshape{n}
\selectfont
Here we want to emphasize that for all small positive constants $\e$ the lifespan $T_\e$ of the solution to the given data $(0,\e u_1)$ in Theorem \ref{dloptimal4.1} can be estimated as follows:
\begin{equation} \label{lifetime}
T_\e \le C\,\e^{-\frac{2\sigma(p-1)}{2\sigma- n(p-1)}} \quad \text{ with }C>0.
\end{equation}
Indeed, let us now consider the case of subcritical exponents. We suppose that $u= u(t,x)$ is a local (in time) energy solution to (\ref{equation1.1}) in $([0,T)\times \R^n)$. To varify the lifespan estimate, we take the initial data $(0,\e u_1)$ in place of $(0,u_1)$ with a small positive constant $\e$ where $u_1\in L_1 \cap L_2$ satisfies the assumption (\ref{optimal1}). In the same way as we did in the steps of the proof of Theorem \ref{dloptimal4.1}, we obtain the following estimte:
\begin{equation}
I_R+ c\e \le C\, I_R^{\frac{1}{p}}\, R^{-2\sigma+ \frac{n+2\sigma}{p'}}.
\label{the4.1.10}
\end{equation}
Here we notice that due to the assumption (\ref{optimal1}), we choose a suitable constant $c$ such that it holds
$$\int_{B_R} u_1(x) \varphi_R(x)dx> c> 0 $$
for any $R> R_0$. From (\ref{the4.1.10}) we arrive at
\begin{equation}
c\e \le C\, I_R^{\frac{1}{p}}\, R^{-2\sigma+ \frac{n+2\sigma}{p'}}- I_R. \label{the4.1.11}
\end{equation}
By the aid of the elementary inequality
$$ A\,y^\gamma- y \le A^{\frac{1}{1-\gamma}} \text{ for any } A>0,\, y \ge 0 \text{ and } 0< \gamma< 1, $$
a straightforward computation gives from (\ref{the4.1.11})
$$ \e \le C\,R^{-2\sigma p'+ n+ 2\sigma}= C\,T^{-\frac{2\sigma p'- n- 2\sigma}{2\sigma}}= C\,T^{-\frac{2\sigma- n(p-1)}{2\sigma(p-1)}}, $$
with $R= T^{\frac{1}{2\sigma}}$. Summarizing, letting $T \to T_\e - 0$ we may conclude (\ref{lifetime}).
\enx

%=================================================================================={Conclusion}

\noindent \textbf{Acknowledgments}\medskip

\noindent The PhD study of the second author is supported by Vietnamese Government's Scholarship (Grant number: 2015/911).\medskip

%=================================================================================={Appendix}
\noindent\textbf{Appendix A}\medskip

%==================================================================
\noindent \textit{A.1. Fractional Gagliardo-Nirenberg inequality}
\begin{md} \label{fractionalGagliardoNirenberg}
Let $1<p,\, p_0,\, p_1<\infty$, $\sigma >0$ and $s\in [0,\sigma)$. Then, it holds the following fractional Gagliardo-Nirenberg inequality for all $u\in L^{p_0} \cap \dot{H}^\sigma_{p_1}$:
$$ \|u\|_{\dot{H}^{s}_p}\lesssim \|u\|_{L^{p_0}}^{1-\theta}\,\, \|u\|_{\dot{H}^{\sigma}_{p_1}}^\theta, $$
where $\theta=\theta_{s,\sigma}(p,p_0,p_1)=\frac{\frac{1}{p_0}-\frac{1}{p}+\frac{s}{n}}{\frac{1}{p_0}-\frac{1}{p_1}+\frac{\sigma}{n}}$ and $\frac{s}{\sigma}\leq \theta\leq 1$ .
\end{md}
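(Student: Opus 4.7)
The plan is to prove this classical interpolation inequality via a dyadic Littlewood--Paley decomposition combined with Bernstein-type estimates on frequency-localized blocks. Fix a standard smooth dyadic partition of unity and let $\Delta_j$, $j\in\Z$, denote the associated Littlewood--Paley projections. Using the square-function characterization
\[
\|f\|_{\dot{H}^s_p}\approx \Big\|\Big(\sum_{j\in\Z} 2^{2js}|\Delta_j f|^2\Big)^{1/2}\Big\|_{L^p},\qquad 1<p<\infty,
\]
which follows from the Mikhlin--H\"ormander multiplier theorem, I would decompose $u=u^{\mathrm{low}}+u^{\mathrm{high}}$ with $u^{\mathrm{low}}=\sum_{j\le N}\Delta_j u$ and $u^{\mathrm{high}}=\sum_{j>N}\Delta_j u$ for a parameter $N\in\Z$ to be optimized.

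For the low-frequency block, Bernstein's inequality yields $\||D|^s\Delta_j u\|_{L^p}\lesssim 2^{j(s+n(1/p_0-1/p))}\|\Delta_j u\|_{L^{p_0}}$, and then $\|\Delta_j u\|_{L^{p_0}}\lesssim \|u\|_{L^{p_0}}$ by the uniform boundedness of $\Delta_j$ on $L^{p_0}$. Summing the resulting geometric series over $j\le N$, which converges thanks to $s+n(1/p_0-1/p)>0$ in the admissible parameter range, gives
\[
\|u^{\mathrm{low}}\|_{\dot{H}^s_p}\lesssim 2^{N(s+n(1/p_0-1/p))}\,\|u\|_{L^{p_0}}.
\]
Dually, for the high-frequency block the same tool produces $\||D|^s\Delta_j u\|_{L^p}\lesssim 2^{-j(\sigma-s+n(1/p-1/p_1))}\||D|^\sigma\Delta_j u\|_{L^{p_1}}$, and summing over $j>N$ (convergent by the complementary sign condition) yields
\[
\|u^{\mathrm{high}}\|_{\dot{H}^s_p}\lesssim 2^{-N(\sigma-s+n(1/p-1/p_1))}\,\|u\|_{\dot{H}^\sigma_{p_1}}.
\]
Choosing $N$ so that the two bounds balance, namely $2^N\approx \bigl(\|u\|_{\dot{H}^\sigma_{p_1}}/\|u\|_{L^{p_0}}\bigr)^{1/(\sigma+n(1/p_0-1/p_1))}$, and substituting back produces the desired inequality with the interpolation exponent equal to
\[
\theta=\frac{s+n(1/p_0-1/p)}{\sigma+n(1/p_0-1/p_1)}=\frac{1/p_0-1/p+s/n}{1/p_0-1/p_1+\sigma/n},
\]
exactly as stated.

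The admissibility constraint $s/\sigma\le\theta\le 1$ is equivalent to the simultaneous sign conditions on the two exponents that are needed for the geometric sums to converge; this is a direct algebraic verification using the definition of $\theta$ together with the hypothesis $s\in[0,\sigma)$. The genuinely delicate point, and the main technical obstacle, is the proper handling of the homogeneous space $\dot{H}^s_p$ itself (rigorously interpreted modulo polynomials): one must justify both the square-function equivalence and the uniform $L^{p_0}$- and $L^{p_1}$-boundedness of the Littlewood--Paley projectors for $1<p_0,p_1<\infty$, each of which is a classical consequence of Calder\'on--Zygmund theory. Once this functional-analytic framework is in place, the decomposition reduces the inequality to the routine Bernstein-plus-optimization scheme carried out above, and no further technicality is required.
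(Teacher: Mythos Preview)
The paper does not supply a proof of this proposition; it simply refers the reader to the cited reference of Hajaiej--Molinet--Ozawa--Wang, so there is no in-paper argument to compare against. Your Littlewood--Paley/Bernstein scheme is the standard heuristic, and it works when $p\ge\max\{p_0,p_1\}$, but as written it does not cover the full stated range.

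The gap is in the two Bernstein steps. The inequality $\|\Delta_j u\|_{L^p}\lesssim 2^{jn(1/p_0-1/p)}\|\Delta_j u\|_{L^{p_0}}$ is only valid for $p_0\le p$ (Bernstein raises, never lowers, the Lebesgue exponent on a dyadic annulus), and likewise your high-frequency estimate needs $p_1\le p$. The hypotheses of the proposition impose no such ordering: for instance $n=1$, $\sigma=1$, $s=1/2$, $p_0=4$, $p_1=2$, $\theta=1/2$ forces $p=8/3$, so $p_1<p<p_0$ and your low-frequency bound fails outright. Relatedly, your claim that the two sign conditions are equivalent to $s/\sigma\le\theta\le 1$ is not correct; they are equivalent to $0<\theta<1$ (assuming a positive denominator). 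The role of the sharper constraint $\theta\ge s/\sigma$ is precisely what makes the argument repairable: setting $1/r=(1-\theta)/p_0+\theta/p_1$, the scaling relation gives $1/r-1/p=(\theta\sigma-s)/n\ge 0$, so one may first apply H\"older blockwise to reach $L^r$ and \emph{then} Bernstein from $L^r$ up to $L^p$. Without this intermediate step (or an equivalent interpolation-theoretic device as in the cited reference), the proof does not go through in the generality claimed.
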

For the proof one can see \cite{Ozawa}.
\medskip

%==================================================================
\noindent \textit{A.2. Modified Bessel functions}

\begin{md} \label{FourierModifiedBesselfunctions}
Let $f \in L^p(\R^n)$, $p\in [1,2]$, be a radial function. Then, the Fourier transform $F(f)$ is also a radial function and it satisfies
$$ F(f)(\xi)= c \int_0^\ity g(r) r^{n-1} \tilde{J}_{\frac{n}{2}-1}(r|\xi|)dr,\quad g(|x|):= f(x), $$
where $\tilde{J}_\mu(s):=\frac{J_\mu(s)}{s^\mu}$ is called the modified Bessel function with the Bessel function $J_\mu(s)$ and a non-negative integer $\mu$.
\end{md}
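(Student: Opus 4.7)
My plan is to prove the statement in three steps: establish rotational invariance of the Fourier transform on radial functions, reduce to a one-variable integral via spherical coordinates, and identify the resulting angular average of a plane wave with a Bessel function.

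First, I would verify that $F(f)$ is radial. For $f\in L^1(\R^n)$ and $R\in O(n)$, the change of variable $y=Rx$ (with $|\det R|=1$) together with radiality $f(Ry)=f(y)$ yields $F(f)(R\xi)=F(f)(\xi)$ for every $\xi\in\R^n$. For $p\in(1,2]$ the Fourier transform defined through Hausdorff--Young still commutes with rotations, either because $R$ acts isometrically on $L^p$ and on $L^{p'}$ and intertwines with $F$, or by density of radial Schwartz functions in the closed subspace of radial $L^p$ functions.

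Next, since $F(f)$ depends only on $|\xi|$, I would evaluate it at $\xi=|\xi|e_n$ and change to spherical coordinates $x=r\omega$, $r\ge 0$, $\omega\in S^{n-1}$. For $f\in L^1\cap L^p$ Fubini applies, giving
\[
F(f)(\xi)=\int_0^\ity g(r)\,r^{n-1}\,\Phi_n(r|\xi|)\,dr,
\qquad
\Phi_n(s):=\int_{S^{n-1}} e^{-is\omega_n}\,d\omega.
\]
So the task reduces to showing $\Phi_n(s)=c\,\tilde{J}_{n/2-1}(s)$ for a dimensional constant $c$. For $n\ge 2$ I slice $S^{n-1}$ by latitudes, set $\omega_n=\cos\theta$ with $\theta\in[0,\pi]$, and integrate out the orthogonal $S^{n-2}$ to obtain
\[
\Phi_n(s)=|S^{n-2}|\int_0^\pi e^{-is\cos\theta}\sin^{n-2}\theta\,d\theta.
\]
Comparison with Poisson's integral representation
\[
J_\mu(s)=\frac{(s/2)^\mu}{\Gamma(\mu+\tfrac12)\sqrt{\pi}}\int_0^\pi e^{-is\cos\theta}\sin^{2\mu}\theta\,d\theta
\]
with $\mu=n/2-1$, combined with the definition $\tilde{J}_\mu(s)=J_\mu(s)/s^\mu$, absorbs the factor $s^\mu$ and produces exactly the claimed form. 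The special case $n=1$ is handled separately through $J_{-1/2}(s)=\sqrt{2/(\pi s)}\cos s$, which matches the degenerate ``spherical integral'' $\Phi_1(s)=2\cos s$.

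Finally, for general $p\in(1,2]$ with $f\notin L^1$, I would approximate $f$ in $L^p$ by a sequence $f_k$ of radial Schwartz functions. Each $f_k$ obeys the integral representation from the previous step, Hausdorff--Young ensures $F(f_k)\to F(f)$ in $L^{p'}$, and after passing to a subsequence converging almost everywhere one concludes that the same formula holds for $f$, the right-hand side being interpreted as an improper integral/distribution when necessary. The main obstacle is really the Bessel identity in the third step; all the normalization constants can be folded into the unnamed constant $c$ of the statement, so the crucial point is only the exponent $\mu=n/2-1$, which is fixed by matching the powers of $\sin\theta$ in Poisson's formula to the Jacobian $\sin^{n-2}\theta$ of the latitude slicing.
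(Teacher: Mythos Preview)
Your argument is correct and follows the standard route to this classical identity: rotational invariance gives radiality of $F(f)$, polar coordinates reduce the transform to a radial integral against the spherical average $\Phi_n(s)=\int_{S^{n-1}}e^{-is\omega_n}\,d\omega$, and Poisson's integral representation of $J_\mu$ with $\mu=n/2-1$ identifies $\Phi_n(s)$ as $c\,\tilde{J}_{n/2-1}(s)$. The separate treatment of $n=1$ and the density/Hausdorff--Young extension to $p\in(1,2]$ are appropriate.

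Note, however, that the paper does \emph{not} supply its own proof of this proposition: it is simply recorded in the appendix as a known fact about Fourier transforms of radial functions, alongside a list of properties of the modified Bessel functions, and is then invoked in the proof of Lemma~\ref{lemma2.4}. So there is nothing to compare against; your write-up is a perfectly good derivation of a result the paper takes for granted. The only cosmetic point is that the closing sentence about interpreting the right-hand side ``as an improper integral/distribution when necessary'' is vague; if you want a clean statement for general $f\in L^p$, it is tidier to say that the formula holds first for radial Schwartz functions and then extends by continuity of both sides (the left in $L^{p'}$, the right via dominated convergence on compacta after fixing $|\xi|>0$), rather than leaving the mode of convergence unspecified.
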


\begin{md} \label{PropertiesModifiedBesselfunctions}
The following properties of the modified Bessel function hold:
\begin{enumerate}
\item $s\,d_s\tilde{J}_\mu(s)= \tilde{J}_{\mu-1}(s)-2\mu \tilde{J}_\mu(s)$,
\item $d_s\tilde{J}_\mu(s)= -s\tilde{J}_{\mu+1}(s)$,
\item $\tilde{J}_{-\frac{1}{2}}(s)= \sqrt{\frac{2}{\pi}}\cos s$ and $\tilde{J}_{\frac{1}{2}}(s)= \sqrt{\frac{2}{\pi}} \frac{\sin s}{s}$,
\item $|\tilde{J}_\mu(s)| \le Ce^{\pi|\fontshape{n}\selectfont\text{Im}\mu|} \text{ if } s \le 1, $ \\
and $\tilde{J}_\mu(s)= Cs^{-\frac{1}{2}}\cos \big( s-\frac{\mu}{2}\pi- \frac{\pi}{4} \big) +\mathcal{O}(|s|^{-\frac{3}{2}}) \text{ if } |s|\ge 1$,
\item $\tilde{J}_{\mu+1}(r|x|)= -\frac{1}{r|x|^2}\partial_r \tilde{J}_\mu(r|x|)$, $r \ne 0$, $x \ne 0$.
\end{enumerate}
\end{md}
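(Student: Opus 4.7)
All five items are classical properties of Bessel functions, and the plan is to derive each of them from two standard recurrences for the (non-normalized) Bessel function $J_\mu$, namely
\begin{equation*}
\frac{d}{ds}\bigl(s^\mu J_\mu(s)\bigr) = s^\mu J_{\mu-1}(s),\qquad \frac{d}{ds}\bigl(s^{-\mu} J_\mu(s)\bigr) = -s^{-\mu}J_{\mu+1}(s),
\end{equation*}
together with the standard power-series expansion of $J_\mu$ near $s=0$ and its Hankel-type asymptotic expansion at infinity. These ingredients are textbook material (e.g.\ Watson's treatise), so the work is purely bookkeeping of the normalization $\tilde{J}_\mu(s) = s^{-\mu}J_\mu(s)$.

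I would treat (ii) first because it is the most direct: applying the second recurrence above to $\tilde{J}_\mu(s)= s^{-\mu}J_\mu(s)$ gives $\tilde{J}_\mu'(s) = -s^{-\mu}J_{\mu+1}(s) = -s\cdot s^{-(\mu+1)}J_{\mu+1}(s) = -s\,\tilde{J}_{\mu+1}(s)$, which is precisely (ii). Property (v) then follows at once from (ii) by the chain rule $\partial_r \tilde{J}_\mu(r|x|) = |x|\,\tilde{J}_\mu'(r|x|) = -r|x|^2\,\tilde{J}_{\mu+1}(r|x|)$, which one can solve for $\tilde{J}_{\mu+1}(r|x|)$ since $r\ne 0$ and $x\ne 0$.

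Next, for (i) I would expand $\tilde{J}_\mu'(s) = -\mu s^{-\mu-1}J_\mu(s) + s^{-\mu}J_\mu'(s)$ and substitute the standard identity $J_\mu'(s) = J_{\mu-1}(s) - (\mu/s)J_\mu(s)$. After combining terms and rewriting $s^{-\mu}J_{\mu-1}(s) = s^{-1}\tilde{J}_{\mu-1}(s)$, one obtains $\tilde{J}_\mu'(s) = s^{-1}\tilde{J}_{\mu-1}(s) - 2\mu\,s^{-1}\tilde{J}_\mu(s)$; multiplying by $s$ gives (i). Property (iii) is simply the elementary closed forms $J_{1/2}(s)=\sqrt{2/(\pi s)}\,\sin s$ and $J_{-1/2}(s)=\sqrt{2/(\pi s)}\,\cos s$ divided by the appropriate power of $s$.

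The only item that needs a quantitative estimate rather than an algebraic identity is (iv). For $s\le 1$ I would insert the power series $J_\mu(s)=\sum_{k\ge 0}\frac{(-1)^k}{k!\,\Gamma(\mu+k+1)}(s/2)^{\mu+2k}$ into the definition of $\tilde{J}_\mu$; the resulting series $\tilde{J}_\mu(s) = 2^{-\mu}\sum_{k\ge 0}\frac{(-1)^k}{k!\,\Gamma(\mu+k+1)}(s/2)^{2k}$ is entire in $s$, uniformly bounded on $[0,1]$ by a constant times $1/|\Gamma(\mu+1)|$, and the reflection formula $\Gamma(\mu)\Gamma(1-\mu)=\pi/\sin(\pi\mu)$ produces the required factor $e^{\pi|\mathrm{Im}\,\mu|}$. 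For $|s|\ge 1$, one simply inserts the Hankel-type asymptotic expansion $J_\mu(s)=\sqrt{2/(\pi s)}\,\cos(s-\mu\pi/2-\pi/4)+\mathcal{O}(s^{-3/2})$ and divides by $s^\mu$. I do not expect any genuine obstacle; the one point requiring care is tracking the powers of $s$ that arise from the normalization factor $s^{-\mu}$ when translating between $J_\mu$ and $\tilde{J}_\mu$.
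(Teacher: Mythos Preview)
The paper does not actually supply a proof of this proposition: it is listed in the appendix as a collection of standard facts about Bessel functions and is simply quoted when needed (for instance in the proof of Lemma~\ref{lemma2.4}). Your proposal therefore goes beyond what the paper does, by filling in the textbook derivations.

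What you outline is correct. Items (i), (ii), (iii), (v) are purely algebraic consequences of the two classical recurrences and the closed forms for $J_{\pm 1/2}$, exactly as you describe. Your handling of (iv) via the power series near $0$ and the Hankel asymptotics at infinity is also the standard route. The one caveat you already flag is real: when you divide the asymptotic expansion of $J_\mu$ by $s^\mu$, the leading power is $s^{-\mu-1/2}$, not $s^{-1/2}$ as written in the paper's statement. The paper's formulation of (iv) is tailored to the case $\mu=0$ (which is where it is actually used, for $\tilde{J}_0$ in the even-dimensional part of Lemma~\ref{lemma2.4}); for general $\mu$ one should carry the extra $s^{-\mu}$ factor. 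Your remark about ``tracking the powers of $s$'' is precisely the right observation here.
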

\medskip

%==================================================================
\noindent \textit{A.3. Useful lemmas}
\bbd \label{LemmaL1normEstimate}
Let $n\ge1$, $c>0$, $\alpha >0$ and $\beta \in \R$ satisfy $n+\beta >0$. 
The following estimates hold for $t>0${\rm :}
$$ \int_{|\xi|\le1} |\xi|^\beta e^{-c|\xi|^\alpha t}d\xi \lesssim (1+t)^{-\frac{n+\beta}{\alpha}} \quad \text{ and } \quad \int_{|\xi|\ge1} |\xi|^\beta e^{-c|\xi|^\alpha t}d\xi \lesssim t^{-\frac{n+\beta}{\alpha}}. $$
\ebd
\begin{proof}
In order to prove the first desired estimate, we shall split our consideration into two cases. In the first case $t \in (0,1]$, we get immediately the following estimate:
$$ \int_{|\xi| \le 1} |\xi|^\beta e^{-c|\xi|^\alpha t}d\xi \lesssim \int_0^1 |\xi|^{n+\beta-1} e^{-c|\xi|^\alpha t}d|\xi| \lesssim 1. $$
For the second case $t \in [1,\ity)$, we carry out the change of variables $\xi^\alpha t= \eta^\alpha$, that is, $\xi= t^{-\frac{1}{\alpha}} \eta$ to dervie
$$ \int_{|\xi| \le 1} |\xi|^\beta e^{-c|\xi|^\alpha t}d\xi \lesssim t^{-\frac{n+\beta}{\alpha}} \int_0^\ity |\eta|^{n+\beta-1} e^{-c|\eta|^\alpha}d|\eta| \lesssim t^{-\frac{n+\beta}{\alpha}}. $$
Hence, from the above two estimates we can conclude the desired statement. In the same treatment we can prove the second estimate. This completes our proof.
\end{proof}

\bbd[\cite{IkehataMichihisa}, \cite{Michihisa1}]
Let $n\ge1$ and $f\in L^{1,\gamma}(\R^n)$ with $\gamma\ge0$. 
\begin{enumerate}
\item[{\rm(i)}] 
It holds that 
\begin{align}
\label{moment1}
\biggr|
\hat{f}(\xi)
-\sum_{|\alpha|\le[\gamma]}
M_\alpha(f)(i\xi)^\alpha
\biggr|
\lesssim 
|\xi|^\gamma \|f\|_{L^{1,\gamma}}
\end{align}
for $\xi\in\R^n$. 
\item[{\rm(ii)}] 
It is true that 
\begin{align}
\label{moment2}
\lim_{t\to\infty}
t^{\frac{n}{4\sigma}+\frac{\gamma}{2\sigma}}
\biggr\|
e^{-t|\xi|^{2\sigma}}\hat{f}(\xi)
-\sum_{|\alpha|\le[\gamma]}
M_\alpha(f)(i\xi)^\alpha 
e^{-t|\xi|^{2\sigma}}
\biggr\|_{L^2}
=0.
\end{align}
\end{enumerate}
\ebd
The proof of inequality~\eqref{moment1} was given in \cite{IkehataMichihisa} and \cite{Michihisa1}. 
With a slight modification, we can also obtain \eqref{moment2} and thus the proof is omitted.

%=================================================================================={References}


\begin{thebibliography}{00}

\bibitem{Dabbicco} M. D'Abbicco, \textit{$L^1-L^1$ estimates for a doubly dissipative semilinear wave equation}, Nonlinear Differ. Equ. Appl., \textbf{24} (2017), 1-23.
\bibitem{DabbiccoEbert2014} M. D'Abbicco, M.R. Ebert, \textit{Diffusion phenomena for the wave equation with structural damping in the $L^p-L^q$ framework}, J. Differ. Equ., \textbf{256} (2014), 2307-2336.
\bibitem{DabbiccoEbert2016} M. D'Abbicco, M.R. Ebert. \textit{A classifiation of structural dissipations for evolution operators}, Math. Methods Appl. Sci., \textbf{39} (2016), 2558-2582.
\bibitem{DabbiccoEbert2017} M. D'Abbicco, M.R. Ebert, \textit{A new phenomenon in the critical exponent for structurally damped semi-linear evolution equations}, Nonlinear Anal., \textbf{149} (2017), 1-40.
\bibitem{DabbiccoReissig} M. D'Abbicco, M. Reissig, Semilinear structural damped waves, \textit{Math. Methods Appl. Sci.}, \textbf{37} (2014), 1570--1592.
\bibitem{DaoReissig1} T.A. Dao, M. Reissig, \textit{An application of $L^1$ estimates for oscillating integrals to parabolic like semi-linear structurally damped $\sigma$-evolution models}, J. Math. Anal. Appl., \textbf{476} (2019), 426--463.
\bibitem{DaoReissig2} T.A. Dao, M. Reissig, \textit{$L^1$ estimates for oscillating integrals and their applications to semi-linear models with $\sigma$-evolution like structural damping}, Discrete Contin. Dyn. Syst. A, \textbf{39} (2019), 5431--5463.
\bibitem{DuongReissig} P. T. Duong, M. Reissig, \textit{The external damping Cauchy problems with general powers of the Laplacian}, in: New Trends in Analysis and Interdisciplinary Applications: Trends in Mathematics, Birkh\"{a}user, Cham (2017), pp. 537-543.
\bibitem{ReissigEbert} M.R. Ebert, M. Reissig,  \textit{Methods for partial differential equations, qualitative properties of solutions, phase space analysis, semilinear models}, Birkh\"{a}user, 2018.
\bibitem{GalaktionovMitidieri} V.A. Galaktionov, E.L. Mitidieri, S.I. Pohozaev, Blow-up for higher-order prabolic, hyperbolic, dispersion and Schr\"{o}dinger equations, in \textit{Monogr. Res. Notes Math.}, Chapman and Hall/CRC, 2014.
\bibitem{Ozawa} H. Hajaiej, L. Molinet, T. Ozawa, B. Wang,  \textit{Necessary and sufficient conditions for the fractional Gagliardo-Nirenberg inequalities and applications to Navier-Stokes and generalized boson equations, Harmonic analysis and nonlinear partial differential equations}, Res.Inst.Math.Sci. (RIMS), RIMS Kokyuroku Bessatsu, B26, Kyoto, (2011), 159-175.
\bibitem{Ikehata} R. Ikehata, \textit{Asymptotic profiles for wave equations with strong damping}, J. Differ. Equ., \textbf{257} (2014), 2159--2177.
\bibitem{IkehataMichihisa} R. Ikehata, H. Michihisa, \textit{Moment conditions and lower bounds in expanding solutions of wave equations with double damping terms}, Asymptot. Anal., (to appear).
\bibitem{IkehataSawada} R. Ikehata, A. Sawada, \textit{Asymptotic profile of solutions for wave equations with frictional and viscoelastic damping terms}, Asymptot. Anal., \textbf{98} (2016), 59-77.
\bibitem{IkehataTakeda2017} R. Ikehata, H. Takeda, \textit{Critical exponent for nonlinear wave equations with frictional and viscoelastic damping terms}, Nonlinear Anal., \textbf{148} (2017), 228-253.
\bibitem{IkehataTodorovaYordanov} R. Ikehata, G. Todorova, and B. Yordanov, \textit{Wave equations with strong damping in Hilbert spaces}, J. Differ. Equ., \textbf{254} (2013), 3352--3368.
\bibitem{Michihisa1} H. Michihisa, \textit{Optimal leading term of solutions to wave equations with strong damping terms}, Hokkaido Math. J., in press.
\bibitem{Michihisa2} H. Michihisa, \textit{Expanding methods for evolution operators of strongly damped wave equations}, (2018), submitted.
\bibitem{Narazaki} T. Narazaki. \textit{$L^p-L^q$ estimates for damped wave equations and their applications to semi-linear problem}, J. Math. Soc. Japan, \textbf{56} (2004), 585--626.
\bibitem{Takeda} H. Takeda, Higher-order expansion of solutions for a damped wave equation, Asymptot. Anal., \textbf{94} (2015), 1-31.
\bibitem{Zhang} Q.S. Zhang, \textit{A blow-up result for a nonlinear wave equation with damping: the critical case}, C. R. Acad. Sci. Paris S\'{e}r. I Math., \textbf{333} (2001), 109-114.

\end{thebibliography}
\end{document}